\theoremstyle{plain}
\newtheorem{corollary}{Corollary}[section]
\newtheorem{lemma}{Lemma}[section]
\newtheorem{proposition}{Proposition}[section]
\theoremstyle{remark}
\newtheorem{remark}{Remark}[section]
\numberwithin{equation}{section}
\newcommand{\bC}{{\mathbb C}}
\newcommand{\bZ}{{\mathbb Z}}
\newcommand{\bN}{{\mathbb N}}
\newcommand{\B}{\mathcal{B} }
\newcommand{\Fe}{\mathcal{F} }
\renewcommand{\>}{\rangle}
\begin{document}

\title{Generating functions for symmetric and shifted symmetric functions}

\author{Naihuan Jing}
\address{Department of Mathematics, North Carolina State University, Raleigh, NC 27695, USA}
\email{jing@math.ncsu.edu}
\author{Natasha Rozhkovskaya}
\address{Department of Mathematics, Kansas State University, Manhattan, KS 66502, USA}
\email{rozhkovs@math.ksu.edu}
\keywords{vertex operators, Schur functions, generalized symmetric functions, generating functions, Jacobi\,-\,Trudi identity, boson\,-\,fermion correspondence.}         %
\thanks{}
\subjclass[2010]{Primary 05E05, Secondary 17B65, 17B69, 11C20}

\begin{abstract}
We describe  generating  functions  for  several important families of classical  symmetric functions and shifted  Schur  functions.
The approach is originated from vertex operator  realization  of symmetric functions and offers a unified method to
treat various families of symmetric functions and their shifted analogues.
\end{abstract}

\maketitle


\section{Introduction}
This paper pursues   two  goals.
The first one  is  to describe several important  families of  symmetric functions and their analogues  through  generating  functions. With such an  approach,  one could  argue that all  information  about a particular family  of  symmetric  functions   is encoded in  the  defining factor   of the  generating  function, which we call  ``correlation function'' below.   For example,  all the  properties of  Schur symmetric functions $s_{\lambda}$ can be  reconstructed from  the  correlation  function  $f(x)= 1-x$, and the  generating  function  in this case  has the form
 \begin{align*}
\det\left[ u_i^{ i-j}\right]\prod_{i=1}^{l} Q(u_i)=\sum_{l(\lambda)\le l} s_\lambda u_1^{\lambda_1}\cdots u_l^{\lambda_l},
  \end{align*}
where $Q(u)$ is the generating function of the homogeneous complete symmetric functions (for detail see the next sections).  
 This approach, extended to  the shifted Schur  functions $s^*_{\lambda}$ \'a la
 Okounkov-Olshanski \cite{OO1}, allows  us to  find a {\it new formula} for their   generating  function:
\begin{align*}
\det\left[ \frac{1}{(u_i| i-j)}\right]\prod_{i=1}^{l} Q^*(u_i-i+1)
 =\sum_{l(\lambda)\le l}\frac{s^*_\lambda}{(u_1|\lambda_1)\cdots (u_l|\lambda_l)},
\end{align*}
where $Q^*(u)$ is the generating function of the homogeneous complete shifted symmetric functions.

 The  origin  of  this   point of view    lies  in  the renown  vertex operator  realization of  Schur functions that  arises  naturally  in the framework of  the boson-fermion correspondence. This motivates  our second  goal to describe in a  unified way the interpretation of  the generating  functions   as a result of a successive  application of  vertex operators to  a vacuum vector.

The results of this paper are related to  our earlier  work \cite{JR}, where  we  observed that  the statement of the  boson-fermion correspondence can be  deduced   directly  from  the  Jacobi\,--\,Trudi  identity (\ref{e:JT2}) or, equivalently, Giambelli formula,  -- the  classical statements of  the  theory of symmetric functions.  This observation motivated us in  \cite{JR}   to apply  a generalized version  of Jacobi\,--\,Trudi identity for the  construction of  different  versions of the   action of the special infinite dimensional Clifford algebra on the polynomial algebras that  arise in representation theory  as analogues of the algebra of symmetric functions.  In particular, we obtained  explicit  formulas for the vertex operators   corresponding to  universal characters of classical Lie algebras, shifted Schur functions and  generalized  Schur symmetric functions associated to linear recurrence relations.

 In this  note  we take  a  related  but   different direction. We  observe that  several  important  families of  symmetric  functions can be interpreted as  coefficients of the generating  functions  defined by  certain  correlation factors.
 Then there is a simple  and a natural way to  introduce the action of the Clifford algebra on the  vector space  spanned by  the coefficients of the  generating  function, which in turn  gives rise to (in some cases modified) vertex operators.
   In particular, this  gives  vertex operator realization  for  several  generalizations of  symmetric  functions not  covered by  determinant-like  formula of the Jacobi\,--\,Trudi type (e.g.  Schur $Q$-functions  and Hall-Littlewood  symmetric  functions).

The paper is organized as follows.  In Section \ref{Class}
we introduce   the  general construction and   illustrate  it   with the examples of   Schur functions and  Schur Q-functions. This set up immediately  allows  us  to define  creation and  annihilation  operators,     which  is  done in  Section \ref{Oper}  along with ``normally  ordered'' decomposition  of  the corresponding  vertex operators. This is further illustrated by the
construction of the Hall-Littlewood symmetric functions.
In Section \ref{Shift} we apply the same approach  to  find the generating  function for  shifted  Schur functions and its vertex operators presentation. In the last   Section \ref{ap} for the  convenience of  reader we  outline the main components  of  boson-fermion  correspondence and review some  properties of  symmetric  functions used in the note.

\section{Symmetric functions}\label{Class}
\subsection{Definitions}\label{Def}

Let $\B=K[Q_1, Q_2,\dots] $  be the ring of  polynomials in infinitely many commuting variables, where  $K=\bC$ or $\bC[t]$ in the  examples  that  follow.   For convenience, set $Q_0=1$ and $Q_i=0$ for $i<0$.
Write the generating  function
     $	 Q(u)= \sum_{k=0}^{\infty} {Q_k}{u^k}, $
 and set
  	$  R(u)= \sum_{k=0}^{\infty} {R_k}{u^k}$
to be  the formal  inverse series of $Q(u)$:
\begin{align}\label{QR}
Q(u)R(u)= R(u)Q(u)=1.
  \end{align}
Note that the coefficients  $R_k$'s are polynomials in $Q_l$'s, and   $R_0=1$. Set also $R_i=0$  for $i<0$.
The  example of  Schur functions  considered  below in   Section \ref{Sch1}  will  explain  the expected  meaning of  elements $Q_i$'s and $R_i$'s  in the context of   symmetric functions.

 Let  $f\in \bC[[x]]$
be  an invertible  power series in  $x$:
\begin{align}\label{f_set}
 f(x)= \sum_{k=0}^{\infty}f_k x^k ,\quad
 f^{-1}(x)= \sum_{k=0}^{\infty}\tilde f_k x^k.
\end{align}
   Define
\begin{align}
Q(u_1,u_2,\dots, u_l)&=\prod_{i<j} f(u_j/u_i) \prod_{i=1}^{l} Q(u_i),\label{e:Qdef}\\
R(u_1, u_2,\dots, u_l)&=\prod_{i<j} f(u_j/u_i)\prod_{i=1}^{l} R(u_i),\label{e:Rdef}
\end{align}
and  for an ordered $k$-tuple of  integers  $\lambda= (\lambda_1,\dots,\lambda_k)$  set
\begin{align}
Q_\lambda: = Q(u_1,u_2,\dots, u_l)|_{{u_1^{\lambda_1}\dots u_l^{\lambda_l}}},\notag\\
R_\lambda: = R(u_1,u_2,\dots, u_l)|_{{u_1^{\lambda_1}\dots u_l^{\lambda_l}}}.\notag
\end{align}

We think of $Q(u_1,u_2,\dots, u_l)$ and $R(u_1,u_2,\dots, u_l)$  as the generating functions  for   coefficients $Q_\lambda$ and $R_\lambda$ respectively.

\begin{lemma}
The elements $Q_\lambda$ and $R_\lambda$ are well-defined by  (\ref{e:Qdef}, \ref{e:Rdef})  as polynomials in  variables $Q_1,Q_2,\dots$.
\end{lemma}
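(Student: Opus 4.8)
The plan is to compute the coefficient $Q_\lambda$ explicitly and to check that only finitely many terms contribute. Expanding each factor, write $f(u_j/u_i)=\sum_{k_{ij}\ge 0} f_{k_{ij}}\,u_i^{-k_{ij}}u_j^{k_{ij}}$ and $Q(u_m)=\sum_{n_m\ge 0} Q_{n_m} u_m^{n_m}$. Multiplying these out, the generating function $Q(u_1,\dots,u_l)$ of (\ref{e:Qdef}) becomes a sum over all tuples of nonnegative integers $(k_{ij})_{1\le i<j\le l}$ and $(n_1,\dots,n_l)$ of the term $\big(\prod_{i<j} f_{k_{ij}}\big)\big(\prod_{m=1}^{l} Q_{n_m}\big)\,u_1^{e_1}\cdots u_l^{e_l}$, where
\begin{align*}
e_m = n_m + \sum_{1\le i<m} k_{im} - \sum_{m<j\le l} k_{mj},\qquad 1\le m\le l.
\end{align*}
Hence $Q_\lambda$ is precisely the sum of those terms with $e_m=\lambda_m$ for all $m$, and it remains to see that this sum is finite.

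First I would add the $l$ equations $e_m=\lambda_m$: each $k_{ij}$ (with $i<j$) occurs once with sign $+1$ (in $e_j$) and once with sign $-1$ (in $e_i$), so the $k$'s cancel and $n_1+\dots+n_l=|\lambda|$, where $|\lambda|:=\lambda_1+\dots+\lambda_l$. In particular there are no solutions when $|\lambda|<0$ (so $Q_\lambda=0$), and otherwise $0\le n_m\le|\lambda|$ for every $m$. Next I would bound the $k_{ij}$ by induction on $i$: rewriting $e_i=\lambda_i$ as $\sum_{i<j\le l} k_{ij} = n_i + \sum_{h<i} k_{hi} - \lambda_i$, the right-hand side is bounded once $n_i$ and the $k_{hi}$ with $h<i$ are bounded (the base case $i=1$ reads $\sum_{1<j\le l} k_{1j} = n_1-\lambda_1 \le |\lambda|-\lambda_1$), and since each $k_{ij}$ is nonnegative this bounds each of them. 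Thus the set of tuples $(k_{ij},n_m)$ contributing to $Q_\lambda$ is finite.

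Consequently $Q_\lambda$ is a finite $\bC$-linear combination of products $Q_{n_1}\cdots Q_{n_l}$ (using the conventions $Q_0=1$ and $Q_n=0$ for $n<0$), hence a well-defined polynomial in $Q_1,Q_2,\dots$. The statement for $R_\lambda$ follows verbatim from the same computation applied to (\ref{e:Rdef}): the coefficients $R_k$ of $R(u)$ are themselves polynomials in the $Q_l$ by the remark after (\ref{QR}), so again $R_\lambda$ is a finite sum of products of such polynomials and is well defined.

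I expect the only genuine obstacle to be the finiteness bookkeeping in the middle paragraph. The essential observation is the telescoping that removes the $k_{ij}$ from the total-degree relation, forcing $\sum_m n_m=|\lambda|$; once the $n_m$ are pinned to a finite range, a triangular induction controls the $k_{ij}$, and the remaining claims are formal.
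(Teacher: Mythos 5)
Your proof is correct, but it is organized differently from the paper's. The paper argues by induction on the number of variables $l$: it writes $Q(u_1,\dots,u_l,v)=\prod_i f(v/u_i)\,Q(u_1,\dots,u_l)\,Q(v)$, extracts the coefficient of $u_1^{a_1}\cdots u_l^{a_l}v^m$, and observes that the convention $Q_k=0$ for $k<0$ forces $0\le r_1+\cdots+r_l\le m$, so only finitely many terms survive at each inductive step. You instead expand all factors of (\ref{e:Qdef}) at once and show directly that the linear system $e_m=\lambda_m$ on the exponent tuples $(k_{ij},n_m)$ has finitely many nonnegative solutions: the telescoping identity $\sum_m n_m=|\lambda|$ pins down the $n_m$, and your triangular induction on $i$ then bounds the $k_{ij}$. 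Both arguments rest on the same two facts --- $f(u_j/u_i)$ involves only nonnegative powers of $u_j/u_i$ and $Q(u)$ only nonnegative powers of $u$ --- but your global version buys an explicit description of $Q_\lambda$ as a sum over a finite set of exponent tuples and the observation that $Q_\lambda=0$ whenever $|\lambda|<0$, while the paper's recursion is shorter and mirrors the one-variable-at-a-time structure used later for the operators $\Psi^\pm$. Your reduction of the $R_\lambda$ case to the fact that the $R_k$ are polynomials in the $Q_l$ matches the paper's closing remark that the argument for $R$ is identical.
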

\begin{proof}
This is proved by induction.   The statement is  true for  coefficients  of  $Q(u)$  just  by the definition.
We have:
\begin{align}
Q(u_1, \dots, u_l,v)&= \prod_{i=1}^{l}f(v/u_i) Q(u_1,\dots,  u_l) Q(v)\notag\\
&= \sum_{\lambda_i\in\bZ}\sum_{k=0}^{\infty}
 \sum_{ r_1,\dots, r_l=0} ^{\infty}
 f_{r_1}\cdots f_{r_l} Q_{(\lambda_1,\ldots,\lambda_l)}Q_k
u_1^{\lambda_1-r_1}\cdots u_l^{\lambda_l-r_l}
v^{k+r_1+\dots + r_l}\notag\\
 &=\sum_{a_i\in \bZ} \sum_{m=0}^{\infty} \sum_{ r_1,\dots, r_l=0} ^{\infty} f_{r_1}\cdots f_{r_l}
 Q_{(a_1+r_1,\ldots, a_l+ r_l)}Q_{m- r_1-\cdots - r_l} u_1^{a_1}\cdots u_l^{a_l} v^{m}.\label{e01}
\end{align}
Since $Q_k=0$ for $k<0$, we get $0\le r_1+\dots+ r_l\le m$, and the coefficient of $u_1^{a_1}\dots u_l^{a_l} v^{m}$  in (\ref{e01})
is a sum of polynomials in $Q_n$'s with finitely many nonzero terms.
For $R( u_1,\dots, u_l)$  the argument is exactly the same.

\end{proof}
\begin {remark}
We restricted ourselves to the case when the generating  function for $Q_\lambda$'s  is  determined by  an invertible series  $f(v/u)$  in non-negative powers of  $(v/u)$.
One can ask, if   this definition  can be  extended  to a more  general  function  $f(u,v)$ by setting
\begin{align*}
Q(u_1,u_2,\dots, u_l):=\prod_{i<j} f(u_i,u_j) \prod_i Q(u_i).
\end{align*}
Note that in such a case certain restrictions on $f$ would  follow right away.
Indeed,  assume formally  that $f(u,v)= \sum_{k\in \bZ} \sum_{l\in \bZ} f_{k,-l} u^{k}v^{-l}$. Write
\begin{align*}
Q(u,v)&= f(u,v) Q(u) Q(v)=\sum_{k,l\in \bZ} \sum_{s, t=0}^{\infty} f_{k,-l} Q_sQ_t u^{k+s}v^{-l+t}\\
&=\sum_{m,n\in \bZ}\left( \sum_{s, t=0}^{\infty} f_{m-s,n-t} Q_{s}Q_{t}\right) u^{m}v^{n}.
\end{align*}
Then in general,  one has to impose some conditions on $f(u,v)$ to make
 the sum in  the parentheses   finite.  The existence of  well-defined  series $R(u_1,\dots, u_l)$  and the  construction of well-defined operators  $DQ_k$ and $DR_k$  as in  Section \ref{adj}  below would  imply further  restrictions.
\end{remark}

We will often   use  abbreviated  notation $\bar u$  for  multivariable argument $(u_1,\dots, u_l)$ of functions:
$$Q(\bar u)=Q(u_1,\dots, u_l),\quad  \quad R(\bar u)=R(u_1,\dots, u_l),$$
$$Q(v,\bar u)=Q(v,u_1,\dots, u_l),\quad \text{ and} \quad R(v,\bar u)=R(v,u_1,\dots, u_l).$$

\subsection{Generating function for Schur symmetric  functions.} \label{Sch1}
The celebrated  Schur functions (see Appendix \ref{A-sch})  serve as  the main example of functions  with   a generating  series of the form (\ref{e:Qdef}). Namely, set
\begin{align}
f(x)= 1-x.\label{e:f1}
\end{align}
Then
\begin{align*}
Q(\bar u)&=\prod_{i<j}  \left(1-\frac{u_j}{u_i}\right) \prod_{i=1}^{l} Q(u_i)
= \det[u_i^{i-j}] \prod_{i=1}^{l} Q(u_i)
= \det [u_i^{i-j}Q(u_i) ]\\
&=\sum_{\sigma\in S_l} sgn(\sigma) \sum_{a_1\dots a_l} Q_{a_1}u_1^{a_1+1-\sigma(1)}\cdots Q_{a_l}u_l^{a_l+l-\sigma(l)}\notag\\
&=
\sum_{\lambda_1\dots \lambda_l} \sum_{\sigma\in S_l}sgn(\sigma)Q_{\lambda_1-1+\sigma(1)}\cdots Q_{\lambda_l-l+\sigma(l)}u_1^{\lambda_1}\cdots u_l^{\lambda_l}\notag
\\
&=\sum_{\lambda_1\dots \lambda_l}\det [Q_{\lambda_i-i+j}]u_1^{\lambda_1}\cdots u_l^{\lambda_l}.\notag
\end{align*}
Then  the Jacobi\,--\,Trudi  identity for  Schur symmetric  functions  (\ref{e:JT1})
implies that
  the identification  of generators  $Q_k$ with the complete  symmetric functions $h_k$ defined  by (\ref{defh}) leads to that of $Q_\lambda$ with the
  Schur symmetric  functions $s_\lambda$ for  any {\it partition} $\lambda$ with at most $l$ parts.

  Moreover,  observe that one can use  the Jacobi\,--\,Trudi  identity to extend the definition of $s_\lambda$
 to  any {\it integer vector}.   Namely, define
 \begin{align}\label{salph}
 s_\alpha =\det[h_{\alpha_i-i+j}]_{1\le i,j\le l}
 \end{align}
  for any   vector  $\alpha = (\alpha_1, \alpha_2,\dots ,\alpha_l)$    with   entries $\alpha_i \in \bZ$ .
 Then it is  clear that
\begin{align}
 s_{(\dots, \alpha_i, \alpha_{i+1},\dots)}= - s_{(\dots, \,\alpha_{i+1} -1\,, \,\alpha_{i}+1\, ,\dots)}.\label{lowr1}
\end{align}
This follows  form permutation of the rows of the determinant (\ref{salph}).
Let $\rho_l= (l-1, l-2,\dots, 0)$, where $l$  is such a number that  at most $l$  entries of the integer vector  $\alpha$  are non-zero.
It is easy to see that
$s_\alpha\ne 0$  if and only if $\alpha -\rho_l = \sigma (\lambda -\rho_l)$ for some  permutation $\sigma \in S_l$  and some  partition $\lambda =(\lambda_1\ge \lambda_2\ge \dots\ge \lambda _l\ge 0)$,   ($\lambda_i\in \bZ_{\ge0}$).   In such a case,
\begin{align}
s_\alpha = sgn(\sigma) s_{\lambda}.\label{lowr2}
\end{align}
For example, $s_{(1,3)}=s_{(1,3,0)}=- s_{(2,2)} = s_{(2,-1,3)}= - s_{(1,-1,4)}$.

\bigskip

  {\it Thus,  through  identification $Q_k= h_k$, where $h_k$  are complete  symmetric  functions (\ref{defh}), the series
  \begin{align}
Q(\bar u)&=\prod_{i<j}  \left(1-\frac{u_j}{u_i}\right) \prod_{i=1}^{l} Q(u_i)=\sum_{l(\lambda)\le l} s_\lambda u_1^{\lambda_1}\cdots u_l^{\lambda_l}\label{fsym}
  \end{align}
 can be interpreted as  the generating function for Schur symmetric  functions indexed by   integer vectors $\lambda$  with  $l(\lambda)$  non-zero parts, where $l(\lambda)\le l$ }

Similarly,  identifying $R_k=(-1)^k e_k$,  where  $e_k$  is the $k$-th elementary symmetric function (\ref{defe}),   we deduce from    (\ref{e:JT2}) that
   nonzero  coefficients   of the series $R(\bar u)$  also  coincide up to a sign  with the Schur symmetric functions: if  $\lambda$  is a  partition of $N$,  then $R_\lambda= (-1)^N s_{\lambda^\prime}$,  where $\lambda^\prime$  is the conjugate  to $\lambda$  partition.


\subsection{Generating function for Schur  Q-functions.} \label {Qsch}
Set
\begin{align*}
f(x)=\frac{1-x}{1+x}.
\end{align*}
Then
\begin{align}\label{QQsch}
Q( u_1,\dots, u_{2l})=\prod_{1\le i<j\le 2l} \frac{u_i-u_j}{u_i+u_j} \prod_{i=1}^{2l} Q(u_i).
\end{align}
Recall that the Pfaffian  of a skew-symmetric  matrix $A=(a_{ij})$ of size $2l\times 2l$ is defined as
$$
\mathrm{Pf}(A)=\sum_{\sigma\in S'_{2l}}  sgn(\sigma) a_{\sigma(1) \sigma(2)}  \cdots a_{\sigma(2l-1) \sigma(2l)},
$$
where $S'_{2l}$ consists of $\sigma\in S_{2l}$  such that $\sigma(2k-1)<\sigma(2k)$ for $1\le k\le l$ and
$\sigma(2k-1)<\sigma(2k+1)$ for $1\le k\le l-1$. 
For $|x|<1$ the  function  $f(x)$  has an expansion
$$
f(x)=\frac{1-x}{1+x}= 1+2\sum_{k=1}^{\infty} (-1)^kx^k,
$$
hence  for $|v|<|u|$,
\begin{align*}
Q(u,v)=\frac{u-v}{u+v}  Q(u)Q(v)=1+2\sum_{k=1}^{\infty} (-1)^k\frac{v^k}{u^k} Q(u)Q(v)=
\sum_{\lambda_1,\lambda_2} {Q_{\lambda_1,\lambda_2}}{u^{\lambda_1} v^{\lambda_2}},
\end{align*}
 where
 \begin{align*}
 Q_{\lambda_1, \lambda_2}=Q_{\lambda_1}Q_{\lambda_2}+ 2\sum_{s=1}^{\infty} (-1)^{s}Q_{\lambda_1+s}Q_{\lambda_2-s}.
\end{align*}
Note that the later sum has finitely many non-zero terms.

Denote $ f_{ij}=\frac{u_i-u_j}{u_i+u_j}$
and  observe that
$$
\mathrm{Pf} \left [\frac{u_i-u_j}{u_i+u_j} \right] _{i,j=1,\dots, 2l}=\prod_{1\le i<j\le 2l}  \frac{u_i-u_j}{u_i+u_j}.
$$
 Then
we can write
\begin{align*}
\mathrm{Pf} [Q (u_i,u_j)]_{i,j=1,\dots, 2l}&=\mathrm{Pf} [f_{ij} Q (u_i) Q(u_j)]_{i,j=1,\dots, 2l}\\
&=  \sum_{\sigma\in S'_{2l}}sgn(\sigma)  f_{\sigma(1)\sigma(2)} Q(u_{\sigma(1) }) Q(u_{\sigma(2) }) \cdots f_{\sigma(2l-1)\sigma(2l)} Q(u_{\sigma(2l-1) }) Q(u_{\sigma(2l) })\\
&=  \sum_{\sigma\in S'_{2l}} sgn(\sigma) f_{\sigma(1)\sigma(2)} \cdots f_{\sigma(2l-1)\sigma(2l)} \prod_{i=1}^{2l} Q(u_i)\\
&= \mathrm{Pf} \left [\frac{u_i-u_j}{u_i+u_j} \right] \prod_{i=1}^{2l} Q(u_i)=\prod_{i<j}  \frac{u_i-u_j}{u_i+u_j} \prod_{i=1}^{2l} Q(u_i)\\
&=Q(u_1,u_2,\dots, u_{2l}).
\end{align*}
Therefore we have obtained that
\begin{align*}
Q(u_1,u_2,\dots, u_{2l})=\mathrm{Pf} [Q (u_i,u_j)]_{i,j=1,\dots, 2l}
\end{align*}
But  we also  see that
\begin{align*}
\mathrm{Pf} [Q (u_i,u_j)]_{i,j=1,\dots, 2l}&=
\mathrm{Pf}[ \sum_{\lambda_i,\lambda_j} {Q_{\lambda_i,\lambda_j}}  u_i^{\lambda_i}  u_j^{\lambda_j} ]_{i,j=1,\dots, 2l}\\
&=  \sum_{\sigma\in S'_{2l}}sgn(\sigma)  \sum_{\lambda_i,\lambda_j}Q_{\lambda_{\sigma(1)},\lambda_{\sigma(2)}}  u_{\sigma(1)}^{\lambda_{\sigma(1)} }
 u_{\sigma(2)}^{\lambda_{\sigma(2) }}\cdots Q_{\lambda_{\sigma(2l-1)},\lambda_{\sigma(2l)}} u_{\sigma(2l-1)}^{\lambda_{\sigma(2l-1)} }
 u_{\sigma(2l)}^{\lambda_{\sigma(2l)} }
\\
&=  \sum_{\sigma\in S'_{2l}} sgn(\sigma) \sum_{\lambda_i,\lambda_j}Q_{\lambda_{\sigma(1)},\lambda_{\sigma(2)}}  \cdots  Q_{\lambda_{\sigma(2l-1)},\lambda_{\sigma(2l)}} u_{1}^{\lambda_{1} }\dots
 u_{2l}^{\lambda_{2l} }
\\
&=\sum_\lambda \mathrm{Pf} [Q_{\lambda_i,\lambda_j}]u_1^{\lambda_1}\cdots u_{2l}^{\lambda_{2l}}.
\end{align*}
Therefore,  the  coefficient   of $u_1^{\lambda_1}\cdots u_{2l}^{\lambda_{2l}}$ in  (\ref{QQsch}) is of the form
$$
Q_\lambda = \mathrm{Pf} [Q_{\lambda_i, \lambda_j}]_{i,j=1,\dots 2l},
$$
which  for  strict partitions $(\lambda_1>\dots>\lambda_{2l}\ge 0)$  gives  one  of the definitions   of Schur $Q$-functions (see  Section \ref{App:q}  for the summary  of the  properties).
{\it Thus, identification $Q_k$  with Schur $Q$-polynomials  leads to the interpretation of
\begin{align}
\mathrm{Pf} [Q (u_i,u_j)]=\prod_{1\le i<j\le 2l} \frac{u_i-u_j}{u_i+u_j} \prod_{i=1}^{2l} Q(u_i) \label{fq}
\end{align}
  as  a  generating function for Schur $Q$-functions $Q_\lambda$'s. }

\begin{remark}
 Note that   from (\ref{def:q}), $Q(u)Q(-u)=1$. Hence for Schur $Q$-functions, $R(u)=  Q(-u)$.
 \end{remark}
\begin{remark}
 Generating  functions  (\ref{fsym}) and   (\ref {fq}) along with  the generating  function  for Hall-Littlewood   polynomials  (\ref{fHL})  discussed below can be found in  \cite{Md}  III - (8.8) and  III -  (2.15).
\end{remark}

\section{Operators.}   \label{Oper}
\subsection{Creation and annihilation  operators}\label{caop}
We refer to  Section \ref{BF}  for the outline of classical boson-fermion  correspondence which provides the motivation  for our constructions. The correspondence  involves the  action of Clifford algebra  (fermions) on  Fock  boson  space through  vertex operators $\Psi^\pm (u)$.
Note that   traditional formulas for vertex operators have  a ``normally ordered  form'':  they  are decomposed into a
product of two generating functions, which separate the action of differentiations and
of multiplication operators. This decomposition is important for further applications of
vertex operators.

 Our next goal  is to  introduce  families of  operators acting on the coefficients of the  generating  functions $Q(\bar u)$ (or  $R(\bar u)$) that generalize the classical  action of Clifford algebra  on  Fock  boson  space. It turns out, that in  this setting  these operators have very  simple  meaning.
 In case of  the  generating  function for Schur symmetric functions  we  get  back   vertex operators of    classical boson-fermion  correspondence.

In the setting  of  Section \ref{Def}, consider the  operators  $\Psi^{+}_k$ $(k\in \bZ)$    acting on the  set  of  coefficients $Q_\lambda$'s and $\Psi^{-}_k$  $(k\in \bZ)$, acting on the  set of coefficients
 $R_\lambda$'s by the rule
\begin{align}
\Psi^+_k (Q_\lambda)= Q_{(k,\lambda)}, \quad
\Psi^-_{-k} (R_\lambda)= R_{(k,\lambda)}.\label{pq1}
\end{align}
Set
\begin{align*}
\Psi^+(v)=\sum_{k\in \bZ}\Psi^+_k v^k, \quad
\Psi^-(v)=\sum_{k\in \bZ}\Psi^-_{-k} v^{k}.
\end{align*}
Equivalently, the action of $\Psi^\pm_k$  is defined in terms of  generating functions:
\begin{align}
\Psi^+(v)Q(u_1,\dots,  u_l)= Q(v,u_1,\dots, u_l),\label{psi+}\\
\Psi^-(v)R(u_1,\dots,  u_l)= R(v,u_1,\dots, u_l).\label{psi-}
\end{align}
Note that  {\it   generating functions $Q(\bar u)$  and $R(\bar u)$ can be interpreted as  a result of  successive applications  of operators  $\Psi^\pm(u_i)$  to a vacuum vector:}
\begin{align*}
Q(u_1,\dots u_l)= \Psi^+(u_1)\circ \dots\circ \Psi^+(u_l) \, ( 1),\\
R(u_1,\dots u_l)= \Psi^-(u_1)\circ \dots\circ \Psi^-(u_l) \, (1).
\end{align*}
In cases when  the coefficients  $Q_\lambda$'s
 (and/or $R_\lambda$'s)   span the whole space $\B$ or form a linearly independent spanning set when $\lambda$ runs through a subset of partitions,   formulas  (\ref{psi+})  and (\ref{psi-})
 define the   action of  operators  $\Psi^\pm_k$ on  the whole space $\B$. This condition   holds for   Schur functions,  Schur  Q-functions and  Hall-Littlewood symmetric functions considered  below.
\subsection{Adjoint operators} \label{adj}
We   want to  write  ``normally  ordered  decomposition''  of $\Psi^\pm(u)$ similar to decomposition  (\ref{psi1}), (\ref{psi11}).
We introduce   operators
  $DR_m$   acting on $\B$ as follows.
  Let  $f(x)$  be the function  of the form  (\ref{f_set})  that  we  normalize by setting  $f_0=1$.
Collect  operators $DR_m$  in a  generating function
   \begin{align}
DR(u)=\sum_{m=0}^{\infty}\frac{DR_m}{u^m},
   \end{align}
and define  the action of  operators $DR_m$  on the  generators  $Q_k$'s by the formula
 \begin{align}
    DR(u)\, \big(Q(v)\big)= f(v/u) Q(v).\label{e:a0}
     \end{align}
The relation   (\ref{e:a0}) is equivalent to the condition
 \begin{align*}
 &DR_m(Q_k)=f_m Q_{k-m} \quad (m=0,1,2,\dots ,\quad  k=1,2,\dots ), \\
  &DR_m(1)= 1.
  \end{align*}
We  extend the action of $DR_m$ to  $\B$  by the following  rule.
For any $P, S\in \B$, set
   \begin{align}
 \quad DR_m(P S)= \sum_{k+l=m} DR_k(P) DR_l(S),\label{e:a2}\\
 \quad DR_m(P +S)=  DR_m(P) +DR_m(S).\label{e:a3}
 \end{align}
In particular, (\ref{e:a2}) and (\ref{e:a3}) imply
\begin{align}
DR(u)\, (PS)\, = DR(u)(P)\, DR(u)(S).\label{a12}
\end{align}

Set  also
$$
DQ(u)=\sum_{k=0}^{\infty}\frac{DQ_k}{u^k}
$$
 to be the inverse formal series of $DR(u)$:
\begin{align}\label{e:a11}
 DQ(u)\circ DR(u)=Id.
\end{align}
Note that
operators $DR_k$ commute with each other, $DR_0=Id$, and the coefficients  of $DQ(u)$
are polynomials in $DR_k$. Thus,  $DQ_k$ also  commute  with each other and with   $DR_k$, and, in particular, we can also  write   $DR(u)\circ DQ(u)= Id$.

We summarize the  basic  properties of  $DR(u)$ and $DQ(u)$  in the following  lemma:
\begin{lemma}
Along with  defining properties (\ref{e:a0})  and (\ref{a12}) one has
\begin{align}
      \quad DQ(u)\,(Q(v))&= f^{-1}(v/u) Q(v),  \label{e:a22}\\%
     DR(u)\,(R(v))= f^{-1}(v/u) R(v),
  &\quad
DQ(u)\,(R(v))= f(v/u) R(v), \label{e:a21}\\
DQ(u)\, \big(PS\big)\,& = DQ(u)\big(P\big)\, DQ(u)\big(S\big).\label{a8}
\end{align}
\end{lemma}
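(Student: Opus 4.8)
The plan is to derive each identity from the defining relation (\ref{e:a0}), the multiplicativity (\ref{a12}), and the inverse relation (\ref{e:a11}), treating the generators $Q_k$ first and then extending to all of $\B$ by the product rule. First I would establish (\ref{e:a22}): apply $DQ(u)$ to both sides of $DR(u)(Q(v)) = f(v/u)Q(v)$. Since $DQ(u)$ is a generating function in $1/u$ whose coefficients are polynomials in the $DR_k$, which act as scalars on the fixed-degree pieces of $Q(v)$, the factor $f(v/u)$ is inert under $DQ(u)$ except insofar as it multiplies; more precisely, $DQ(u)\circ DR(u) = \mathrm{Id}$ gives $Q(v) = DQ(u)\big(f(v/u)Q(v)\big) = f(v/u)\,DQ(u)(Q(v))$, using that $DQ(u)$ is $K[[1/u]]$-linear in the coefficient ring containing $f(v/u) \in \bC[[v/u]]$. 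Dividing by the invertible series $f(v/u)$ yields (\ref{e:a22}). Here one must be slightly careful that $DQ(u)$ commutes with multiplication by the scalar series $f(v/u)$; this holds because each $DQ_k$ is built from the $DR_m$, each of which only lowers the $v$-degree and acts linearly, so multiplication by a $u$-only/fixed series commutes with the operator.

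Next I would prove (\ref{a8}), the multiplicativity of $DQ(u)$. The cleanest route is: from (\ref{a12}) we know $DR(u)$ is an algebra endomorphism of $\B[[1/u]]$ for each $u$ (a ``group-like'' operator); its formal inverse $DQ(u)$ is then automatically an algebra endomorphism as well. Concretely, for $P, S \in \B$ write $DR(u)(P) = P'$, $DR(u)(S) = S'$, so $DR(u)(PS) = P'S'$ by (\ref{a12}); applying $DQ(u)$ and using (\ref{e:a11}) gives $PS = DQ(u)(P'S')$, i.e. $DQ(u)(P')DQ(u)(S') = DQ(u)(P'S')$. Since $DR(u)$ is invertible, $P'$ and $S'$ range over all of $\B[[1/u]]$ (or at least over a spanning set obtained from $\B$), so this is exactly (\ref{a8}). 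One should note the product rule (\ref{e:a2}) for $DR_m$ is what powers (\ref{a12}), and the same combinatorial identity propagates to the inverse coefficients by the standard argument that the convolution inverse of a multiplicative sequence of operators is multiplicative.

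Finally, for (\ref{e:a21}) I would use that $R(v)$ is the formal inverse of $Q(v)$: from $Q(v)R(v) = 1$ and the multiplicativity (\ref{a12}) of $DR(u)$, apply $DR(u)$ to both sides to get $DR(u)(Q(v))\,DR(u)(R(v)) = DR(u)(1) = 1$, hence $f(v/u)Q(v)\cdot DR(u)(R(v)) = 1$, which forces $DR(u)(R(v)) = f^{-1}(v/u)R(v)$ after multiplying by $R(v)f^{-1}(v/u)$ and using $Q(v)R(v)=1$ again. The identity $DQ(u)(R(v)) = f(v/u)R(v)$ follows by the symmetric argument using (\ref{a8}): $DQ(u)(Q(v))\,DQ(u)(R(v)) = 1$ and (\ref{e:a22}). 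The main obstacle, and the point deserving the most care, is the justification that these generating-function operators commute with multiplication by the scalar series $f(v/u)$ and that the formal manipulations (applying $DR(u)$ to an identity in $\B[[v,1/u]]$, cancelling invertible series) are legitimate — i.e. that all sums involved are finite in each fixed multidegree, which is exactly the content guaranteed by the well-definedness Lemma and the grading argument used there. Everything else is a short formal computation.
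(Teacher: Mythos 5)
Your proposal is correct and follows essentially the same route as the paper: (\ref{e:a22}) is obtained by applying $DQ(u)\circ DR(u)=Id$ to $Q(v)$ and cancelling the invertible scalar series $f(v/u)$, the identities (\ref{e:a21}) follow by applying the multiplicative operators to $Q(v)R(v)=1$, and (\ref{a8}) is the statement that the inverse of the multiplicative operator $DR(u)$ is again multiplicative. The only cosmetic difference is that the paper proves (\ref{a8}) by inserting $Id=DQ(u)\circ DR(u)$ around the product $DQ(u)(P)\cdot DQ(u)(S)$ and pushing $DR(u)$ inside via (\ref{a12}), whereas you phrase it via surjectivity of $DR(u)$; both versions rest on the same implicit extension of (\ref{a12}) to $\B[[1/u]]$.
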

\begin{proof}
 Formula in (\ref{e:a22})  is  implied by
\begin{align*}
Q(v) =DQ(u)\circ DR(u) \,\big(Q(v)\big)= DQ(u) (f(v/u) Q(v)).
\end{align*}
Equalities of (\ref{e:a21})  follow from (\ref{e:a22}), (\ref{a12}) and (\ref{QR}),  and  for a proof of (\ref{a8})
 write
\begin{align*}
&DQ(u)\big(P\big)\cdot\, DQ(u)\big(S\big)=Id\, \left( DQ(u)(P)\, \cdot DQ(u)(S)\right)\\
&=(DQ(u)\,\circ DR(u))\, \left( \, DQ(u)(P)\, \cdot DQ(u)(S)\,\right) \\
&=DQ(u)  \big(DR(u)  \circ DQ(u) (P)\,\cdot\,DR(u)\circ DQ(u)(S)\big)=\, DQ(u) (P\cdot S).
\end{align*}
\end{proof}

\begin{remark}
The explicit action of    operators $DR_m$ and $DQ_l$  can be obtained by expanding the formulas (\ref{e:a22}),  (\ref{e:a21}) in  powers $u$ and $v$.
For example,  in case of  Schur  functions (when $f(x)=1-x$) the  action  of $DR_m$ and $DQ_l$ on the generators $h_k$  and $e_k$  ( $k=0,1,2,\dots $)  of algebra symmetric functions $\B$  is
\begin{align}
DR_0(h_k)&=h_k ,\quad
 DR_1(h_k)=-h_{k-1},\quad
 DR_m(h_k)=0, \quad (m=2,3,\dots),\label{e:11}\\
DQ_0(e_k)&=e_k ,\quad
 DQ_1(e_k)=e_{k-1}, \quad
 DQ_m(e_k)=0,\quad (m=2,3,\dots). \label{e:12}
\end{align}
Also  one should  note that in  case of Schur functions  the  operators  $DQ_k$  and $DR_k$  are the  adjoint operators   to the  operators of multiplication   by  complete symmetric  functions $h_k$ and  elementary  symmetric  functions $(-1)^ke_k$ (see \cite{Md}, I- Example 5.3).
\end{remark}

\begin{remark}
A collection of  additive  maps  acting on a ring   satisfying the  properties  of the form  (\ref{e:a2}) are  called  Hasse\,-\,Schmidt's derivations  (see e.g. \cite{GS})

\end{remark}

\subsection{Decomposition of creation and annihilation  operators}\label{decomp}

The action of operators $\Psi^{\pm}_k$ on the coefficients  of   generating functions  $Q(\bar u)$, $R(\bar u)$ can be expressed through multiplication operators and adjoint  operators. We show this  in the  form of  compositions of generating  functions of the corresponding  operators.
\begin{proposition}\label{DR}
Assume that coefficients $\{Q_{(\lambda_1,\dots,  \lambda_l)}|\, \lambda_i\in\bZ,\quad   l=1,2,\dots,\}$   as  well as   coefficients $\{R_{(\lambda_1,\dots,  \lambda_l)}|\,  \lambda_i\in\bZ,\quad   l=1,2,\dots\}$ span the vector space $\B$ and  that  $\Psi^\pm_k$ are well-defined operators on the linear vector space $\B$. Then
\begin{align}
\Psi^+(v)=Q(v)\circ DR(v),\quad
\Psi^-(v) =R(v)\circ DQ(v).\,\label{pqr}
\end{align}

\end{proposition}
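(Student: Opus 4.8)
The plan is to verify the two operator identities in \eqref{pqr} by checking that both sides agree when applied to an arbitrary element of $\B$, and since by hypothesis the coefficients $Q_\lambda$ (respectively $R_\lambda$) span $\B$, it suffices to check the identities on these spanning sets. Concretely, I would verify that $\Psi^+(v)$ and $Q(v)\circ DR(v)$ have the same effect on each $Q(u_1,\dots,u_l)$, using the defining relation \eqref{psi+} for the left-hand side and the multiplicativity \eqref{a12} together with \eqref{e:a0} for the right-hand side.

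First I would compute the right-hand side applied to $Q(\bar u) = Q(u_1,\dots,u_l)$. By definition \eqref{e:Qdef}, $Q(\bar u) = \prod_{i<j} f(u_j/u_i)\,\prod_{i=1}^l Q(u_i)$. The factor $\prod_{i<j} f(u_j/u_i)$ is a scalar series in the $u_i$'s (not involving the generators $Q_k$), so $DR(v)$ acts trivially on it; applying \eqref{a12} to the product $\prod_i Q(u_i)$ and then \eqref{e:a0} to each factor $Q(u_i)$ gives
\begin{align*}
DR(v)\bigl(Q(\bar u)\bigr) = \prod_{i<j} f(u_j/u_i)\,\prod_{i=1}^l \bigl(f(u_i/v)\,Q(u_i)\bigr) = \prod_{i=1}^l f(u_i/v)\;\cdot\;Q(\bar u).
\end{align*}
Multiplying by $Q(v)$ on the left (i.e.\ the operator of multiplication by the generating series $Q(v)$) then yields $\prod_{i=1}^l f(u_i/v)\, Q(v)\, Q(\bar u)$. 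On the other hand, $\Psi^+(v)Q(\bar u) = Q(v,u_1,\dots,u_l)$ by \eqref{psi+}, and by the definition \eqref{e:Qdef} applied to the $(l+1)$-tuple $(v,u_1,\dots,u_l)$ this equals $\prod_{i=1}^l f(u_i/v)\,\prod_{i<j}f(u_j/u_i)\,Q(v)\prod_{i=1}^l Q(u_i) = \prod_{i=1}^l f(u_i/v)\, Q(v)\, Q(\bar u)$, since the extra pair-factors involving the new variable $v$ are exactly $\prod_{i=1}^l f(u_i/v)$. The two expressions coincide, so $\Psi^+(v) = Q(v)\circ DR(v)$ on the spanning set, hence on all of $\B$. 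For the identity $\Psi^-(v) = R(v)\circ DQ(v)$ the argument is entirely parallel: apply $DQ(v)$ to $R(\bar u)$ using \eqref{a8} and \eqref{e:a21} (which gives $DQ(v)(R(w)) = f(w/v)R(w)$), multiply by $R(v)$, and compare with $R(v,\bar u)$ computed from \eqref{e:Rdef}.

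The main point requiring care — and the only real obstacle — is bookkeeping with the variable conventions for $f$: the pair-factor attached to the new variable $v$ in $Q(v,u_1,\dots,u_l)$ is $\prod_i f(u_i/v)$ (with $v$ playing the role of the first, ``smallest-index'' variable), and one must confirm that this is precisely the factor $\prod_i f(u_i/v)$ produced by letting $DR(v)$ act through \eqref{e:a0}, where the relation reads $DR(u)(Q(v)) = f(v/u)Q(v)$. Matching ``$v$'' in \eqref{e:a0} to ``$u_i$'' here and ``$u$'' to ``$v$'' here gives exactly $f(u_i/v)$, so the conventions are consistent; but this is the step where an index slip would break the proof, so I would state it explicitly. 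A secondary subtlety is the legitimacy of treating $\prod_{i<j}f(u_j/u_i)$ as a scalar under $DR(v)$: this is justified because $DR(v)$ acts by \eqref{e:a3} additively and by \eqref{e:a2} as a Hasse–Schmidt derivation fixing $1$, so it is $K[[u_1^{\pm1},\dots,u_l^{\pm1}]]$-linear where the $u_i$ are treated as scalars, and $DR_m(1)=1$ forces $DR(v)$ to fix any such scalar series. With these two points addressed, the comparison is the routine computation sketched above.
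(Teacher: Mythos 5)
Your proof is correct and follows essentially the same route as the paper: reduce to the spanning set of $Q(\bar u)$, apply the multiplicativity of $DR(v)$ together with \eqref{e:a0} factorwise, and compare with the definition \eqref{e:Qdef} of $Q(v,\bar u)$. Your bookkeeping of the argument of $f$ is in fact the correct one --- $DR(v)\big(Q(u_i)\big)=f(u_i/v)\,Q(u_i)$, producing exactly the new pair-factors $\prod_i f(u_i/v)$ of $Q(v,\bar u)$ --- whereas the paper's displayed computation writes $\prod_i f(v/u_i)$, which appears to be a typo (it would yield $Q(\bar u,v)$ rather than $Q(v,\bar u)$).
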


\begin{proof}
Let us prove the first  equality of  (\ref{pqr}), the  second one  follows  by   the similar argument.
By the assumption of the  proposition and by (\ref{psi+})  it is enough  to show  that
$Q(v)\circ DR(v) \, \big( Q(\bar u)\big)=Q(v,\bar u)$,
  We have:
\begin{align*}
&Q(v)\circ DR(v)\, \big( Q(\bar u)\big) = Q(v)\circ DR(v)\, \left( \prod_{i<j} f(u_j/u_i) \prod_i Q(u_i)\right)\\
&\quad =Q(v) \prod_{i<j} f(u_j/u_i)  \prod_i DR(v)\, \big(Q(u_i)\big)
= Q(v) \prod_{i<j} f(u_j/u_i)  \prod_i f(v/u_i) Q(u_i)= Q(v,\bar u).
\end{align*}
\end{proof}
\begin{corollary}
Under the assumptions of Proposition \ref{DR},
\begin{align*}
\Psi^-(v)\, \big(Q(\bar u)\big)&= \prod_{i=1}^l f^{-1}( u_i/v)R(v)Q(\bar u),\\
\Psi^+(v)\, \big(R(\bar u)\big)&= \prod_{i=1}^l f^{-1}( u_i/v)Q(v)R(\bar u).
\end{align*}
\end{corollary}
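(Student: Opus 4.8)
The plan is to derive both identities of the corollary directly from the decompositions in Proposition~\ref{DR}, namely $\Psi^-(v)=R(v)\circ DQ(v)$ and $\Psi^+(v)=Q(v)\circ DR(v)$, by applying them to the ``wrong'' generating function and using the action of $DQ(u)$ and $DR(u)$ on $Q(w)$, $R(w)$ recorded in the preceding lemma (formulas~(\ref{e:a22}), (\ref{e:a21}), (\ref{a12}), (\ref{a8})).

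First I would compute $\Psi^-(v)\,\big(Q(\bar u)\big)=R(v)\circ DQ(v)\,\big(\prod_{i<j}f(u_j/u_i)\prod_i Q(u_i)\big)$. The scalar factors $\prod_{i<j}f(u_j/u_i)$ are unaffected by the operators, and by the multiplicativity~(\ref{a8}) of $DQ(v)$ over products we get $DQ(v)\big(\prod_i Q(u_i)\big)=\prod_i DQ(v)\big(Q(u_i)\big)=\prod_i f^{-1}(u_i/v)Q(u_i)$ using~(\ref{e:a22}). Reassembling, $\Psi^-(v)\,\big(Q(\bar u)\big)=R(v)\prod_{i<j}f(u_j/u_i)\prod_i f^{-1}(u_i/v)Q(u_i)=\prod_{i=1}^l f^{-1}(u_i/v)\,R(v)\,Q(\bar u)$, which is the first claimed formula. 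The second one, $\Psi^+(v)\,\big(R(\bar u)\big)=Q(v)\circ DR(v)\,\big(\prod_{i<j}f(u_j/u_i)\prod_i R(u_i)\big)$, follows by the same manipulation with $DR$ in place of $DQ$: multiplicativity~(\ref{a12}) gives $DR(v)\big(\prod_i R(u_i)\big)=\prod_i DR(v)\big(R(u_i)\big)=\prod_i f^{-1}(u_i/v)R(u_i)$ by the first equality of~(\ref{e:a21}), and reassembling yields $\prod_{i=1}^l f^{-1}(u_i/v)\,Q(v)\,R(\bar u)$.

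One point that needs a word of care is the meaning of $f^{-1}(u_i/v)$: the series $f^{-1}$ from~(\ref{f_set}) is expanded in non-negative powers of its argument, so $f^{-1}(u_i/v)=\sum_{k\ge0}\tilde f_k u_i^k v^{-k}$, and the product over $i$ together with the factors $Q(v)$, $R(v)$ must be understood as a formal series in the $u_i$ and in $v$ in the appropriate expansion regime (matching the conventions under which $Q(v,\bar u)$ was defined in Section~\ref{Def}); I would note this parenthetically rather than belabor it. The only genuine content beyond bookkeeping is that Proposition~\ref{DR} and the lemma have already done the work, so the main (and quite mild) obstacle is simply to keep the operator-versus-scalar distinction straight and to invoke the correct line of~(\ref{e:a21})/(\ref{e:a22}); there is no deeper difficulty. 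The proof is therefore essentially a two-line substitution for each identity, and I would present it as such.
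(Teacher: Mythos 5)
Your proof is correct and takes exactly the route the paper intends: the corollary is stated there without proof as an immediate consequence of Proposition~\ref{DR} combined with the lemma's formulas (\ref{e:a22}), (\ref{e:a21}) and the multiplicativity rules (\ref{a12}), (\ref{a8}), which is precisely the two-line substitution you carry out.
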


 We will also need the following  equalities implied  by Proposition \ref{DR}:
\begin{align}
\Psi^+(u)\circ \Psi^+(v)&
= f(v/u) \, Q(u)\circ Q(v)\circ DR(u)\circ DR(v),\label{r11}
\\
\Psi^-(u)\circ \Psi^-(v)&
= f(v/u) \, R(u)\circ R(v)\circ DQ(u) \circ DQ(v),\label{r12}
\\
\Psi^+(u)\circ \Psi^-(v)&
=f^{-1}(v/u) \, Q(u)\circ R(v)\circ DR(u)\circ DQ(v),\label{r13}
\\
\Psi^-(v)\circ \Psi^+(u)&
=f^{-1}(u/v) \, Q(u)\circ R(v)\circ DR(u)\circ DQ(v).\label{r14}
\end{align}
These equalities  represent, in terms of  generating functions, the so-called  normal reordering of the products  of operators and  allow us  to write fermion-like   relations  in  many important examples. Note that   our  approach  will provide relations  for  vertex operators  of fermions action (or their  twisted  analogues) as an easy   consequence of  the definition of  operators $\Psi_k^\pm$.

\subsection{Action of algebra of fermions: the case of  Schur functions}\label{sec:ferm}

 Recall that  when  $\B$  is  identified with the ring of symmetric  functions,   formulas  (\ref{pq1})    read as an action   on  Schur functions, which form  a linear basis of $\B$:
\begin{align*}
\Psi^+_k (s_\lambda)= s_{(k,\lambda)}, \quad
\Psi^-_{-k} (s_\lambda)=(-1)^k s_{(k,\lambda^\prime)^\prime},
\end{align*}
where $\lambda$  is a partition, and  $\lambda^\prime$  is a conjugate partition, and  it is  { easy to check } that the action of  operators  $\Psi^\pm_k$  is well-defined on $s_\alpha$'s for all  integer vectors $\alpha$ and on all of linear space $\B$.
Properties   (\ref{lowr1}), (\ref{lowr2}) of $s_\alpha$'s   imply 
relations  of the  operators $\Psi ^\pm _{k}$:
\begin{align*}
\Psi^+_k\Psi^+_l &+ \Psi^+_{l-1}\Psi^+_{k+1}=0,\\
\Psi^-_k\Psi^-_l &+ \Psi^-_{l+1}\Psi^-_{k-1}=0,\\
\Psi^-_k\Psi^+_l& + \Psi^+_{l}\Psi^-_{k}=\delta_{-k,l}.
\end{align*}

Proposition \ref{propfer}  below  combines these  relations  in  formulas on  generating functions.
Namely,  we  introduce normalized generating  functions of  operators
\begin{align*}
\Psi^{+}(u,m)= u^{m+1}z\Psi^{+}(u),\quad
\Psi^{-}(u,m)= u^{-m+1}z^{-1}\Psi^{-}(u).
\end{align*}
Note that these normalized    generating  functions  $\Psi^{\pm}(u,m)$ are exactly  vertex operators  (\ref{psi2}), (\ref{psi21})  (or  equivalently,   (\ref{psi1}), (\ref{psi11}))  that describe the action of Clifford algebra on Fock  boson space $\oplus_m z^m \B$.
\begin{proposition}\label{propfer}
One has the  following  relations.
\begin{align*}
&\Psi^{+}(u,m+1)\circ\Psi^{+}(v,m)+\Psi^{+}(v,m+1)\circ\Psi^{+}(u,m)= 0,\\
&\Psi^{-}(u,m-1)\circ\Psi^{-}(v,m)+ \Psi^{-}(v,m-1)\circ\Psi^{-}(u,m)=0,\\
&\Psi^{-} (u,m+1)\circ\Psi^{+}(v,m)+ \Psi^{+}(v,m-1)\circ\Psi^{-}(u,m)=\delta(u^{-1},v^{-1}).
\end{align*}
Here
$$
\delta(u,v)= \sum_{k\in \bZ} \frac{u^k}{v^{k+1}}=  i_{u,v}\left(\frac{1}{v-u}\right)+ i_{v,u}\left(\frac{1}{u-{v}}\right)
$$
is  the formal delta-function distribution and
the notation
$
i_{v,u} g(u,v)
$
 stands  for the expansion of the rational function  $ g(u,v)$  as a formal series expanded in the region $|v|>|u|$. 
\end{proposition}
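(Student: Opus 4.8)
The plan is to derive all three identities from the normal-ordered decompositions (\ref{r11})--(\ref{r14}), which are available here because for Schur functions the coefficients $Q_\lambda=s_\lambda$ and $R_\lambda$ span $\B$ and the operators $\Psi^\pm_k$ are well defined, so the hypotheses of Proposition \ref{DR} are met. One could instead expand both sides of each identity in $u$ and $v$ and match coefficients against the mode relations displayed above, but the route through (\ref{r11})--(\ref{r14}) keeps everything on the level of generating functions and makes the cancellations transparent.

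First I would record how the normalizations compose. Using $\Psi^+(w,n)=w^{n+1}z\,\Psi^+(w)$ and $\Psi^-(w,n)=w^{-n+1}z^{-1}\,\Psi^-(w)$, on the charge-$m$ subspace $z^m\B$ one has
\begin{align*}
\Psi^+(u,m+1)\circ\Psi^+(v,m)&=u^{m+2}v^{m+1}\,\Psi^+(u)\circ\Psi^+(v),\\
\Psi^-(u,m-1)\circ\Psi^-(v,m)&=u^{-m+2}v^{-m+1}\,\Psi^-(u)\circ\Psi^-(v),\\
\Psi^-(u,m+1)\circ\Psi^+(v,m)&=u^{-m}v^{m+1}\,\Psi^-(u)\circ\Psi^+(v),\\
\Psi^+(v,m-1)\circ\Psi^-(u,m)&=u^{-m+1}v^{m}\,\Psi^+(v)\circ\Psi^-(u),
\end{align*}
together with the variants obtained by interchanging $u$ and $v$ in the first three lines; the essential point is that in each symmetrized sum the $m$-dependence collects into an overall monomial. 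Substituting (\ref{r11}) into the first identity and using that $Q(u),Q(v)$ commute and that $DR(u),DR(v)$ commute, the left side becomes $u^{m+1}v^{m+1}\bigl(u\,f(v/u)+v\,f(u/v)\bigr)\,Q(u)\,Q(v)\,DR(u)\,DR(v)$, and for $f(x)=1-x$ the scalar $u\,f(v/u)+v\,f(u/v)=(u-v)+(v-u)=0$. The second identity is identical, with (\ref{r12}) and the operators $R(\cdot),DQ(\cdot)$ in place of (\ref{r11}) and $Q(\cdot),DR(\cdot)$.

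For the third identity I would substitute (\ref{r13}) and (\ref{r14}) after interchanging $u$ and $v$; then both summands carry the same operator factor $\mathcal O:=Q(v)\circ R(u)\circ DR(v)\circ DQ(u)$, so the left side equals $\bigl(u^{-m}v^{m+1}f^{-1}(v/u)+u^{-m+1}v^{m}f^{-1}(u/v)\bigr)\,\mathcal O$. Expanding $f^{-1}(v/u)=1/(1-v/u)$ in the region $|v|<|u|$ and $f^{-1}(u/v)$ in the region $|u|<|v|$ (these being the expansions forced by well-definedness of the operator products), the scalar collapses to $u^{-m+1}v^{m+1}\bigl(i_{u,v}\tfrac1{u-v}-i_{v,u}\tfrac1{u-v}\bigr)=u^{-m+1}v^{m+1}\delta(u,v)$, which is just $\delta(u^{-1},v^{-1})$ because multiplying $\delta(u,v)=\sum_{k\in\bZ}u^kv^{-k-1}$ by $u^{-m+1}v^{m+1}$ reindexes the sum to $\sum_{j\in\bZ}u^jv^{1-j}$. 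Finally I would invoke the substitution property of the formal delta distribution, $\delta(u^{-1},v^{-1})\,\mathcal O(u,v)=\delta(u^{-1},v^{-1})\,\mathcal O(v,v)$, and evaluate $\mathcal O(v,v)=\bigl(Q(v)R(v)\bigr)\circ\bigl(DR(v)DQ(v)\bigr)=1\circ Id=Id$ using (\ref{QR}) and (\ref{e:a11}). This yields exactly the right-hand side $\delta(u^{-1},v^{-1})$; the two displayed forms of $\delta$ are restatements of its definition.

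I expect the main obstacle to be the bookkeeping in the second step: one must track how the factors $z^{\pm1}$ and the charge-dependent powers $w^{n+1},w^{-n+1}$ combine under composition so that the $m$-dependence really cancels in each symmetrized sum, and in the mixed identity one must be certain that the two a priori different rational expansions of $f^{-1}$ are precisely the ones dictated by the operator products, since otherwise the scalar prefactor would not assemble into a single delta distribution. Everything else reduces to the identity $u\,f(v/u)+v\,f(u/v)=0$ for $f(x)=1-x$ together with $Q(v)R(v)=1$ and $DR(v)DQ(v)=Id$.
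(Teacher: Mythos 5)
Your proposal is correct and follows essentially the same route as the paper: both derive the first two relations from the vanishing of the scalar $u\,f(v/u)+v\,f(u/v)$ for $f(x)=1-x$ via the normal-ordered decompositions (\ref{r11})--(\ref{r12}), and both obtain the third by assembling the two opposite expansions of $f^{-1}$ from (\ref{r13})--(\ref{r14}) into the formal delta distribution and then evaluating $Q(v)R(v)\circ DR(v)DQ(v)=Id$. Your write-up is only slightly more explicit than the paper's about the charge bookkeeping and the delta-substitution step, but there is no substantive difference.
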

\begin{proof} One can deduce the statement  directly from the relations on the coefficients $\Psi^\pm_k$.  However, we would like to note that formulas  (\ref{r11} - \ref{r14})
provide  even a simpler  proof which can be effortlessly  extended to  the cases of  other  generating  functions  (as it will be illustrated below).
For the  first relation from (\ref{r11})   write  immediately
\begin{align*}
&\Psi^{+}(u,m+1)\circ\Psi^{+}(v,m)+\Psi^{+}(v,m+1)\circ\Psi^{+}(u,m)= \\
&= u^{m+1}v^{m+1}\left(u (1-v/u) +v(1-u/v) \right) \, Q(u)\circ Q(v)\circ DR(u)\circ DR(v)
= 0,
\end{align*}
 and  similarly follows the  second  relation of the proposition.
Finally,
\begin{align*}
&\Psi^{-} (u,m+1)\circ\Psi^{+}(v,m)+ \Psi^{+}(v,m-1)\circ\Psi^{-}(u,m)\\
&=u^{-m}v^{m} ( v \,i_{u,v}(1-v/u)^{-1} + u\, i_{v,u}(1-u/v)^{-1}) \, Q(v)\circ R(u)\circ DR(v)\circ DQ(u)\\
&=u^{-m}v^{m} \left( \sum_{r=0}^{\infty}  \frac{v^{r+1}}{u^{r}}   +  \sum_{r=0}^{\infty}  \frac{u^{r+1}}{v^{r}} \right) Q(v)\circ R(u)\circ DR(v)\circ DQ(u)\\
&=u^{-m}v^{m} \delta(u^{-1},v^{-1})  Q(v)\circ R(u)\circ DR(v)\circ DQ(u)= \delta(u^{-1},v^{-1})\cdot Id
\end{align*}
\end{proof}

\subsection{Twisted   fermions  action}
More generally,  consider  a   generating function (\ref{e:Qdef}) with a correlation  function  of the form
$$
f(x)= \frac{1-x}{p(x)},
$$
where $p(x)$  is a polynomial in $x$.
Then  relations  (\ref{r11}) - (\ref{r14}) immediately  imply   ``twisted''   fermion relations  for $\Psi^\pm (u)$:
similar to Proposition \ref{propfer} calculations show that
\begin{align*}
u\, p(v/u) \Psi^{\pm}(u)\Psi^{\pm}(v)+v\, p(u/v) \Psi^{\pm}(v)\Psi^{\pm}(u)&= 0,\\
 vp(u/v) \Psi^{-} (u)\Psi^{+}(v)+  up(v/u) \Psi^{+}(v)\Psi^{-}(u)
&=p(1)^2 \delta(u^{-1},v^{-1})\cdot Id.
\end{align*}
In  particular,  if
$p(x) = 1-tx$,   the  series
\begin{align}
Q(\bar u)&:=\prod_{i<j} \frac{u_i-u_j}{u_i-tu_j} \prod_{i=1}^{l} Q(u_i)\label{fHL}
\end{align}
is the generating  function for Hall-Littlewood  functions (see  Section \ref{HL} for a short review), and
we obtain the relations
for vertex  algebra realization for Hall-Littlewood  functions, which  were  proved  earlier by different  methods   in \cite{J12}:
\begin{align*}
&(u-tv) \Psi^{\pm}(u)\Psi^{\pm}(v)+(v-tu) \Psi^{\pm}(v)\Psi^{\pm}(u)= 0,\\
&(v-tu) \Psi^{-} (u)\Psi^{+}(v)+  (u-tv) \Psi^{+}(v)\Psi^{-}(u)=(1-t)^2 \delta(u^{-1},v^{-1})\cdot Id
\end{align*}

\section{Shifted  symmetric functions}\label{Shift}
In this  section we apply the techniques described above to  deduce a formula for the   (shifted) generating  function of  shifted  Schur  functions and  their    realization analogues  to vertex operators  presentation of  classical symmetric  functions.
The necessary  information on shifted  symmetric functions as well as   some details on their role in representation theory  are
provided in  Section \ref{ashift}.
\subsection{Shifted generating  functions}
Let $f(u)$  be a formal series     or a  function in variable $u$ in some  general sense.
We introduce  the {\it  shift operator}
$$
e^{k\partial_u}\, ( f(u) )= f(u+k).
$$
This  exponential notation is  motivated  by  Taylor  series expansion  formula, where   for an appropriate  class of  functions   in  an  appropriate   domain of convergence one can write
 $$
 f(u+k )=\sum_{s=0}^{\infty}\frac{\, ( k\,\partial_u)^s }{s!} \, \big( \,f(u)\,\big)= e^{k\, \partial_u} \,\big(f(u)\big).
 $$
We will use   short notation
$
e^{k\,\partial_i}:=e^{k\,\partial_{u_i}}
$
for shifts  along  variable  $u_i$ acting on   $f(u_1,\dots, u_l)$.

A falling  factorial  power of $u$  is defined by
\begin{align}
 (u|k)=
  \begin{cases}
 u(u-1)\cdots (u-k+1) \quad \text{for  $k=1,2\dots,$}\\
 1,\quad \text{for  $k=0$,}\\
  \frac{1}{(u+1)\cdots (u+(-k) )} \quad \text{for  $k=-1,-2\dots.$}\\
 \end{cases}\label{falf}
 \end{align}
 Note that   the shifted $k$-th power sum is a result of application  to constant  function $1$ of  the $k$-th power of the  following  operator:
 $$
 \left({u}e^{-\partial_u}\right)^k \,(\,1\,)= {(u|k)}.
 $$
 We will  be interested in  shifted  generating  functions, which will be  infinite sums in monomials of  shifted  powers of  formal variables $u_i$'s.

Let $\B^*$  be the  ring of shifted symmetric functions.
Let  $ h^*_k$  be the shifted  complete symmetric functions and  $e^*_k$ the shifted elementary symmetric functions defined by (\ref{h^*}), (\ref{e^*}).
Consider the  shifted generating  functions \footnote{$Q^*(u)$ and $R^*(u)$  in this  note  correspond to    $H^*(u)$ and $E^*(-u-1)$ respectively  in \cite{OO1}.}
\begin{align}
Q^*(u)=\sum_{k= 0}^{\infty} \frac{h^*_k}{(u|k)},\quad
R^*(u)=\sum_{k=0}^{\infty} {(-1)^ke^*_k}{(u|-k)}.\label {qr*}
\end{align}

It is proved in \cite {OO1} (Corollary 12.3) that
$$
Q^*(u) R^*(u)=1.
$$

Also  consider the   formal  series    of the  following  form:
\begin{align*}
YQ^*(u)=\sum_{k\ge 0}  h^*_k\left(\frac{1}{u} e^{-\partial_u}\right)^k,\quad\quad
YR^*(u)=\sum_{k\ge 0} (-1)^k e^*_k\left( e^{\partial_u}  \frac{1}{u} \right)^k,
\end{align*}
where   $h^*_k$ and $e^*_k$  are viewed as the multiplication operators by respective functions acting on  the space  $\B^*$.
Then (\ref{qr*}) can be  interpreted  as a result of  application  of these ``operators''  to the vacuum vector:
\begin{align*}
Q^*(u)= YQ^*(u)\, (1),\quad R^*(u)= YR^*(u)\,(1).
\end{align*}

As before,  we use the short notation for  multivariable arguments of functions:
$$\bar u=(u_1,\dots, u_l).$$

For a matrix  $A= (a_{ij})_{ij=1,\dots, N}$ with  non-commutative entries,
the determinant is defined by
$$
\det (A) = \sum_{\sigma \in S_N} sgn(\sigma) a_{1\sigma(1)}\cdots a_{N\sigma(N)}.
$$

Then one defines that
\begin{align}\label{YQ1}
YQ^*( \bar u)&= \det\left[   \left( \frac{1}{u_i} e^{-\partial_i}\right)^{i-j}  e^{(1-j)\partial_i}\right]\circ \prod_{i=1}^{l} YQ^*(u_i),\\
YR^*(\bar u)&= \det\left[  \left( e^{\partial_i}\frac{1}{u_i} \right)^{i-j}   e^{(j-1)\partial_i}\right]\circ \prod_{i=1}^{l} YR^*(u_i),  \label{YR1}
\end{align}
and  the result of  application of  (\ref{YQ1}) or  (\ref{YR1})   to the constant function $1$  is a formal   series in   shifted  powers  of $u$  with   coefficients in $\B^*$, which we denote as
\begin{align}\label{Q1}
Q^*( \bar  u)= YQ^*(\bar u) ( 1),\\ R^*( \bar u)= YR^*( \bar u) (1).
\end{align}

\begin{proposition}
One has that
\begin{align}\label{Q3} 
 Q^*(\bar u)&=\det\left[ \frac{1}{(u_i| i-j)}\right]\prod_{i=1}^{l} Q^*(u_i-i+1),
\\
R^*(\bar u)&=\det\left[ {(u_i|j-i)} \right]  \prod_{i=1}^{l} R^*(u_i+i-1).\label{R1}
\end{align}

Also
\begin{align}\label{Q2}
Q^*( \bar u)&= \prod _{i=1}^{l} \left(\frac{1}{u_i}e^{-\partial_i}\right)^{i-1} \left(\prod_{i<j} (u_j-u_i) \,\prod_{i=1}^{l} Q^*(u_i)\right),\\
R^*( \bar u)&= \prod _{i=1}^{l} \left(e^{\partial_i} \frac{1}{u_i}\right)^{i-1}\left(\prod_{i<j} (u_j-u_i)\,\prod_{i=1}^{l} R^*(u_i) \right).\label{R2}
\end{align}

\end{proposition}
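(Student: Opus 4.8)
The plan is to reduce all four formulas to a single pair of one-variable operator identities and then expand the determinants in \eqref{YQ1} and \eqref{YR1} row by row. The starting point is that, directly from \eqref{falf} (a one-line induction, or by moving shifts past multiplications), one has for every $k\in\bZ$
$$\left(\tfrac{1}{u}\,e^{-\partial_u}\right)^{k}=\tfrac{1}{(u|k)}\,e^{-k\partial_u},
\qquad
\left(e^{\partial_u}\,\tfrac{1}{u}\right)^{k}=(u|{-k})\,e^{k\partial_u},$$
the leading factors being read as multiplication operators; these are consistent with $\left(u\,e^{-\partial_u}\right)^{k}(1)=(u|k)$. Throughout we use that operators in distinct variables $u_i$ commute, and that $YQ^*(u_i)(1)=Q^*(u_i)$ and $YR^*(u_i)(1)=R^*(u_i)$ with the $YQ^*(u_i)$, respectively $YR^*(u_i)$, mutually commuting, so that $\bigl(\prod_i YQ^*(u_i)\bigr)(1)=\prod_i Q^*(u_i)$ and similarly for $R^*$.

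To prove \eqref{Q3}, apply the first identity with $k=i-j$ to the $(i,j)$ entry of the matrix in \eqref{YQ1}:
$$\left(\tfrac{1}{u_i}\,e^{-\partial_i}\right)^{i-j}e^{(1-j)\partial_i}
=\tfrac{1}{(u_i|i-j)}\,e^{(1-i)\partial_i},$$
so that the shift appearing in row $i$ no longer depends on $j$. In each monomial $a_{1\sigma(1)}\cdots a_{l\sigma(l)}$ of the determinant every variable $u_i$ occurs in exactly one entry, so all the multiplication operators may be collected on the left, and the determinant of that matrix factors as $\det\bigl[\tfrac{1}{(u_i|i-j)}\bigr]\circ\prod_{i=1}^{l}e^{(1-i)\partial_i}$ with the first determinant scalar-valued. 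Composing with $\prod_i YQ^*(u_i)$ and evaluating at $1$ turns $\prod_i e^{(1-i)\partial_i}$ into the substitution $u_i\mapsto u_i-i+1$, which is \eqref{Q3}. Running the same computation with $e^{\partial_i}\tfrac1{u_i}$ in place of $\tfrac1{u_i}e^{-\partial_i}$ and $e^{(j-1)\partial_i}$ in place of $e^{(1-j)\partial_i}$ yields the entry $(u_i|j-i)\,e^{(i-1)\partial_i}$ and hence \eqref{R1}.

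For \eqref{Q2}, factor the same entry the other way:
$$\left(\tfrac{1}{u_i}\,e^{-\partial_i}\right)^{i-j}e^{(1-j)\partial_i}
=\left(\tfrac{1}{u_i}\,e^{-\partial_i}\right)^{i-1}\circ\Bigl[\left(\tfrac{1}{u_i}\,e^{-\partial_i}\right)^{1-j}e^{(1-j)\partial_i}\Bigr]
=\left(\tfrac{1}{u_i}\,e^{-\partial_i}\right)^{i-1}\circ\bigl[(u_i+1)(u_i+2)\cdots(u_i+j-1)\bigr],$$
where the first identity of the lemma has been used to rewrite the bracket as multiplication by $\tfrac{1}{(u_i|1-j)}=(u_i+1)\cdots(u_i+j-1)$, a monic polynomial of degree $j-1$ in $u_i$. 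Now the operator part of row $i$ is $j$-independent, so collecting every $\left(\tfrac1{u_i}e^{-\partial_i}\right)^{i-1}$ on the left leaves, in the middle, multiplication by $\det\bigl[(u_i+1)\cdots(u_i+j-1)\bigr]$; since column $j$ of this matrix is a monic polynomial of degree $j-1$ in $u_i$, column reduction gives $\det\bigl[(u_i+1)\cdots(u_i+j-1)\bigr]=\det\bigl[u_i^{j-1}\bigr]=\prod_{i<j}(u_j-u_i)$. Composing with $\prod_i YQ^*(u_i)$ and evaluating at $1$ gives \eqref{Q2}; the parallel factorization $\left(e^{\partial_i}\tfrac1{u_i}\right)^{i-j}e^{(j-1)\partial_i}=\left(e^{\partial_i}\tfrac1{u_i}\right)^{i-1}\circ\bigl[u_i(u_i-1)\cdots(u_i-j+2)\bigr]$ gives \eqref{R2}. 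The only delicate point is this repeated ``row-factoring'' of a determinant whose entries do not commute --- multiplication by a function of $u_i$ and the shift $e^{-\partial_i}$ certainly do not --- which is legitimate here solely because each determinant monomial uses every variable in a single entry, so the reorderings invoke only commutativity across distinct variables; the remaining ingredients (the two displayed one-variable identities, checked against \eqref{falf}, and the Vandermonde column reduction) are routine.
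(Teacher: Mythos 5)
Your argument is correct. For \eqref{Q3} and \eqref{R1} it coincides with the paper's proof: both use the one-variable identity $\left(\frac{1}{u}e^{-\partial_u}\right)^{k}=\frac{1}{(u|k)}e^{-k\partial_u}$ to make the shift in row $i$ independent of $j$, pull the scalar determinant out, and apply the result to the vacuum. Where you diverge is in \eqref{Q2} and \eqref{R2}. The paper establishes these by comparing their right-hand sides with the already-proved \eqref{Q3}, \eqref{R1}: it commutes $\prod_i\left(\frac{1}{u_i}e^{-\partial_i}\right)^{i-1}$ past $\prod_i Q^*(u_i)$ and then proves a separate Lemma (its Lemma 4.1) showing $\prod_i\left(\frac{1}{u_i}e^{-\partial_i}\right)^{i-1}\left(\prod_{i<j}(u_j-u_i)\right)=\det\left[\frac{1}{(u_i|i-j)}\right]$, which it does by writing the Vandermonde product as $\det[(u_i-i+2)^{j-1}]$ and converting ordinary powers to rising factorials via Stirling numbers of the second kind (the conversion being unitriangular, hence determinant-preserving). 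You instead work directly from the defining determinant \eqref{YQ1}, refactoring each entry as $\left(\frac{1}{u_i}e^{-\partial_i}\right)^{i-1}$ composed with multiplication by the monic polynomial $\frac{1}{(u_i|1-j)}=(u_i+1)\cdots(u_i+j-1)$, and then performing the column reduction $\det[(u_i+1)\cdots(u_i+j-1)]=\det[u_i^{j-1}]=\prod_{i<j}(u_j-u_i)$. This is essentially the paper's Lemma read in the opposite direction, and it replaces the explicit Stirling-number expansion by the more elementary observation that the columns are monic of the right degrees; your approach also has the merit of deriving all four formulas independently from \eqref{YQ1}--\eqref{YR1} rather than deducing \eqref{Q2}, \eqref{R2} from \eqref{Q3}, \eqref{R1}. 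You are also right, and the paper leaves implicit, that the only noncommutativity issue in factoring these operator-valued determinants is within a single row, which your rearrangement never disturbs.
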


\begin{proof}
Since for any $k\in\bZ$,
\begin{align*}
 \left( \frac{1}{u} e^{-\partial_u}\right)^{k}  =\frac{1}{(u|k)} e^{{-k}\partial_u},
\end{align*}
one has
\begin{align*}
YQ^*( \bar u )
&=\det\left[ \frac{1}{(u_i|i-j)} \right]\prod_{i=1}^{l}e^{({1-i})\partial_i}\circ YQ^*(u_i),
\end{align*}
and similarly,
\begin{align*}
YR^*( \bar  u)=&\det\left[{(u_i|j-i)} \right]\prod_{i=1}^{l}e^{({i-1})\partial_i}\circ YR^*(u_i).
\end{align*}
Then (\ref{Q3}) and (\ref{R1})  follow by application of  $YQ^*( \bar u )$ and $YR^*( \bar u )$ to the vacuum vector  $1$.

For a proof of  (\ref{Q2})  and (\ref{R2}), we see that
\begin{align*}
 \prod _{i=1}^{l} \left(\frac{1}{u_i}e^{-\partial_i}\right)^{i-1}\left(\prod_{i=1}^{l} Q^*(u_i)\prod_{i<j} (u_j-u_i)\right)=
\prod_{i=1}^{l} Q^*(u_i-i+1)  \prod _{i=1}^{l} \left(\frac{1}{u_i}e^{-\partial_i}\right)^{i-1}\left(\prod_{i<j} (u_j-u_i) \right).
\end{align*}
\begin{lemma}\label{lem2}
\begin{align*}
\prod_{i=1}^{l}\left(\frac{1}{u_i}e^{-\partial_i}\right)^{i-1} \left (\prod_{i<j} (u_j-u_i)\right) &=\det\left[ \frac{1}{(u_i|i-j)} \right],\\
\prod_{i=1}^{l}\left( e^{\partial_i}\frac{1}{u_i}\right)^{i-1}\left(\prod_{i<j} (u_j-u_i)\right)&=\det\left[ {(u_i|j-i)} \right] .
\end{align*}
\end{lemma}
\begin{proof}
We  can  rewrite the first product  using  the Vandermonde determinant:
\begin{align}
&\prod_{i=1}^{l}\left(\frac{1}{u_i}e^{-\partial_i}\right)^{i-1}\left(\prod_{i<j} (u_j-u_i)\right)
=\prod_{i=1}^{l}\frac{1}{(u_i|i-1)}\prod_{i<j} (u_j -j-u_i +i)\notag \\
&=\prod_{i=1}^{l}\frac{1}{(u_i|i-1)}\prod_{i<j} ((u_j-j+2 )-(u_i-i+2))
=\prod_{i=1}^{l}\frac{1}{(u_i|i-1)}\det\left[ (u_i-i+2)^{j-1} \right].\label{mat1}
\end{align}
Recall  that  shifted and  ordinary powers  of a variable $x$ are related by
$$
x^m= \sum_{k=0}^{m} (-1)^{m-k}S(m,k)  x(x+1)\cdots (x+k-1),
$$
where $S(m,k)$  are  the Stirling  numbers of the second kind, and $S(m,m)=1$.
Therefore, with $x= u_i-i+2$  we can
expand  by linearity the columns of the  determinant
  to get  (\ref{mat1})    equal to
\begin{align*}
 \prod_{i=1}^{l}\frac{1}{(u_i|i-1)}\det\left[ (u_i-i+2) (u_i-i+3)\cdots (u_i-i+j) \right]
&= \det\left[ \frac{1}{(u_i|i-j)} \right] .\notag
\end{align*}

Similarly,
\begin{align*}
\prod_{i=1}^{l}\left( e^{\partial_i}\frac{1}{u_i}\right)^{i-1}\left(\prod_{i<j} (u_j-u_i)\right)
&=\prod_{i=1}^{l} (u_i|1-i)\det [(u_i+i-1)^{j-1}] \\
&=\prod_{i=1}^{l} (u_i|1-i)\det [(u_i+i-1|j-1)] = \det [(u_i|j-i)] .
\end{align*}
\end{proof}
Then (\ref{Q2})  and (\ref{R2})  are  implied  by   Lemma \ref{lem2}.
This completes the  proof of the proposition.
\end{proof}

\begin{remark}
Note from (\ref{Q2}), (\ref{R2}) that the change of order of  variables  leads to the  following  equalities:
\begin{align*}
\frac{1}{u}e^{-\partial_u}\left(Q^*(u,v, u_2,u_3,\dots)\right)&= -\frac{1}{v}e^{-\partial_v}\left(Q^*(v,u, u_2,u_3,\dots)\right),\\
e^{\partial_u}\frac{1}{u}\left(R^*(u,v, u_2,u_3,\dots)\right)&= -e^{\partial_v}\frac{1}{v}\left(R^*(v,u, u_2,u_3,\dots)\right).
\end{align*}
\end{remark}
\subsection{Shift automorphisms  on $\B^*$}
Following \cite{OO1}, Section  13, observe that the formal action of the shift  operator $e^{-\partial_u} $ on  the  generating function  $Q^*(u)$ corresponds to  an automorphism  $\tau: \B^*\to \B^*$ of shifted symmetric functions. Namely,  write  for $a\in \bZ_{\ge 0}$,
\begin{align}
Q^*(u-a)= e^{-a\partial_u} Q^*(u)= \sum_{k=0}^{\infty}\frac{h^*_k }{(u-a|k)}= \sum_{k=0}^{\infty}\frac{\tau^a(h^*_k) }{(u|k)}.\label{e:tau}
\end{align}

Note that for $ k=1,2,\dots$,
\begin{align}
\frac{1}{(u-1|k)}&=\frac{1}{(u|k)}+\frac{k}{(u|k+1)},\notag \\
{(u+1|-k)}&={(u|-k)}- k{(u|-k-1)}. \quad \label{sh1}
\end{align}
Hence, the  explicit action of $\tau$  on the generators $h^*_k$  ($k=1,2,\dots$) is given by
\begin{align}
\tau(h^*_k)&= h^*_k+(k-1) h^*_{k-1}, \quad \label{tau1} \\
\tau^a(h^*_k)&=\sum_{i=0}^{a}{a\choose i} (k-1|i) h^*_{k-i}   \quad  ( a =1,2,\dots).\label{tau11}
\end{align}

Similarly,   
for  $k=1,2,\dots$,
\begin{align}
\tau^{-1}(e^*_k)&= e^*_k+(k-1) e^*_{k-1}, \quad  \label{tau-1} \\
\tau^{-a}(e^*_k)&= \sum_{i=0}^{a}{a\choose i} (k-1|i) e^*_{k-i}, \quad  ( a =1,2,\dots),\label{tau-11}
\end{align}
which by (\ref{sh1})  corresponds to a shift of a variable of  the generating function   $R^*(u)$ for $a\in \bZ_{\ge 0}$:
$$
e^{a\partial_u} R^*(u)= R^*(u+a)= \sum_k(-1)^{k} e^*_k (u+a|-k)=  \sum_k(-1)^{k} \tau^{-a}(e^*_k )(u|-k).
$$

\subsection{Coefficients of shifted  generating  functions}
Our next goal  is  to identify the coefficients  $Q^*_\lambda$ and  $R^*_\lambda$  of the (shifted)  expansion of  the generating functions $Q^*(\bar u)$   and $R^*(\bar u)$   with the shifted Schur functions. We need the following statement.
\begin{lemma}\label{lem_b}
Let $\tau ^{\pm 1}$ be  the automorphisms of $\B^*$  defined by (\ref{tau1}), (\ref{tau-1}), and let $k_i\in \bZ_{\ge 0}, m_i\in \bZ $ ($i=1,\dots,l$).Then
in the (shifted)  expansions
\begin{align*}
&\prod_{i=1}^{l}\frac{1}{(u_i|m_i)}Q^*(u_i-k_i-m_i) =\sum_{\lambda} C_\lambda {\frac{1}{(u_1|\lambda_1)\cdots (u_l|\lambda_l)}},
\\
&\prod_{i=1}^{l}{(u_i|-m_i)}R^*(u_i+k_i+m_i) =\sum_{\lambda} D_\lambda {{(u_1|-\lambda_1)\cdots (u_l|-\lambda_l)}},
\end{align*}
where the coefficient $C_{\lambda}$
 in the  first expansion is the monomial given by
$$C_\lambda= \tau^{k_1}(h^*_{\lambda_1-m_1})\cdots\tau^{k_l}(h^*_{\lambda_l-m_l}),$$ and
 the coefficient  $D_{\lambda}$  
 in the second expansion is the monomial given by
$$D_\lambda= (-1)^{(\sum{\lambda_i}-\sum m_i)}\, \tau^{-k_1}(e^*_{\lambda_1-m_1})\cdots\tau^{-k_l}(e^*_{\lambda_l-m_l}).$$
\end{lemma}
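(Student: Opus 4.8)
The plan is to reduce the multivariable identity to a product of one-variable statements, and those in turn to the elementary shift relations (\ref{sh1}) together with the definition (\ref{e:tau}) of the automorphism $\tau$. First I would handle the single-variable building block. For $k\in\bZ_{\ge 0}$, $m\in\bZ$, the shift operator gives
\[
Q^*(u-k-m)=e^{-(k+m)\partial_u}Q^*(u)=\sum_{j\ge 0}\frac{h^*_j}{(u-k-m|j)},
\]
and by (\ref{e:tau}) applied twice (shift by $m$, then by $k$) one has $h^*_j/(u-k-m|j)$ reexpanded as $\sum$ of $\tau^{k}\tau^{m}(h^*_j)\cdot(\text{something in }(u|\bullet))$. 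The cleaner route, however, is to absorb only the shift by $k$ into $\tau$ and leave the shift by $m$ to be cancelled against the prefactor $1/(u|m)$. So I would write $Q^*(u-k-m)=e^{-k\partial_u}\bigl(Q^*(u-m)\bigr)$, expand $Q^*(u-m)=\sum_j h^*_j/(u-m|j)$, apply $e^{-k\partial_u}$ and use (\ref{e:tau}) to get $\sum_j \tau^{k}(h^*_j)/(u-m|j)$ — wait, more carefully: $e^{-k\partial_u}$ acts on the variable, so $e^{-k\partial_u}\sum_j h^*_j/(u-m|j)=\sum_j h^*_j/(u-m-k|j)=\sum_j \tau^{m+k}(h^*_j)/(u|j)$ by (\ref{e:tau}). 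The bookkeeping that actually matches the claimed $C_\lambda=\tau^{k_1}(h^*_{\lambda_1-m_1})\cdots$ is: multiply $Q^*(u-k-m)$ by $1/(u|m)$, and use the first relation in (\ref{sh1}) iterated $m$ times (or, for $m<0$, the analogous identity) to see that $\frac{1}{(u|m)}\cdot\frac{1}{(u-m-k|j)}$ is a single term $\frac{1}{(u|m+j)}$ up to the $\tau$-twist — precisely, $\frac{1}{(u|m)(u-m|j')}=\frac{1}{(u|m+j')}$, a telescoping consequence of (\ref{falf}). Combining, $\frac{1}{(u|m)}Q^*(u-k-m)=\sum_{j\ge 0}\tau^{k}(h^*_j)\frac{1}{(u|m+j)}=\sum_{\lambda\ge m}\tau^{k}(h^*_{\lambda-m})\frac{1}{(u|\lambda)}$, which is exactly the $l=1$ case with $C_\lambda=\tau^{k}(h^*_{\lambda-m})$.

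Next I would take the product over $i=1,\dots,l$. Since the variables $u_i$ are independent and the operators act variablewise, the left-hand side factors as $\prod_i\bigl(\frac{1}{(u_i|m_i)}Q^*(u_i-k_i-m_i)\bigr)$, and by the single-variable computation this equals
\[
\prod_{i=1}^{l}\ \sum_{\lambda_i\ge m_i}\tau^{k_i}(h^*_{\lambda_i-m_i})\frac{1}{(u_i|\lambda_i)}
=\sum_{\lambda}\Bigl(\prod_{i=1}^{l}\tau^{k_i}(h^*_{\lambda_i-m_i})\Bigr)\frac{1}{(u_1|\lambda_1)\cdots(u_l|\lambda_l)},
\]
giving $C_\lambda=\tau^{k_1}(h^*_{\lambda_1-m_1})\cdots\tau^{k_l}(h^*_{\lambda_l-m_l})$ as a monomial in $\B^*$. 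The $R^*$ statement is entirely parallel: the building block is $(u|-m)R^*(u+k+m)=e^{k\partial_u}\bigl((u|-m)R^*(u+m)\bigr)$ — careful with the sign bookkeeping here — using the second relation in (\ref{sh1}), the expansion $R^*(u)=\sum_j(-1)^j e^*_j(u|-j)$, and the $\tau^{-1}$-action recorded in (\ref{tau-1})–(\ref{tau-11}); the sign $(-1)^{j}=(-1)^{\lambda_i-m_i}$ accumulates across the $l$ factors to $(-1)^{\sum\lambda_i-\sum m_i}$, which is the prefactor in $D_\lambda$. The telescoping identity needed on the $R^*$ side is $(u|-m)(u+m|-j')=(u|-m-j')$, again immediate from (\ref{falf}).

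The one point requiring genuine care — the main (minor) obstacle — is the sign conventions and index ranges when $m_i$ is negative: then $1/(u_i|m_i)$ is a genuine falling-factorial \emph{polynomial} rather than a reciprocal, the relevant instance of (\ref{sh1}) is the ``second'' one (shift by $+1$), the telescoping $\frac{1}{(u|m)(u-m|j')}=\frac{1}{(u|m+j')}$ still holds for all $m\in\bZ$ by (\ref{falf}) but must be checked case-by-case, and the summation index $\lambda_i$ then runs over $\lambda_i\ge m_i$ with $\lambda_i-m_i\ge 0$ so that $h^*_{\lambda_i-m_i}$ and $e^*_{\lambda_i-m_i}$ are the honest shifted symmetric functions (and vanish otherwise). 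Once the telescoping identity in (\ref{falf}) is verified for all integer $m$, and once one checks that $\tau$ commutes with multiplication (so $C_\lambda$, $D_\lambda$ really are monomials in the images $\tau^{k_i}(h^*_\bullet)$), the rest is the formal manipulation of generating series sketched above. I expect no conceptual difficulty beyond this sign/range bookkeeping; the identity is, at bottom, the statement that shifting the argument of $Q^*$ (resp. $R^*$) is implemented by $\tau$ (resp. $\tau^{-1}$), combined with the multiplicativity of the falling-factorial prefactors.
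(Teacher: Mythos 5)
Your proposal is correct and follows essentially the same route as the paper: expand $Q^*(u-k-m)$ as $\sum_p \tau^k(h^*_p)/(u-m|p)$ using the shift-automorphism relation (\ref{e:tau}) with the nonnegative shift $k$, telescope via $(u|m)(u-m|p)=(u|m+p)$, reindex, and take the product over independent variables, with the parallel computation for $R^*$. The extra care you devote to the case $m<0$ and to verifying the telescoping identity from (\ref{falf}) is exactly the bookkeeping the paper leaves implicit.
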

\begin{proof}
The statement  is implied  by the  following  argument for  $k\in \bZ_{\ge 0}, m\in \bZ $ :
\begin{align*}
\frac{1}{(u|m)}Q^*(u-k-m)&=\sum_{p} \frac{\tau^k(h^*_p)}{(u|m)(u-m|p)}= \sum_{p} \frac{\tau^k(h^*_p)}{(u|m+p)}=\sum_{a} \frac{\tau^k(h^*_{a-m})}{(u|a)},\\
{(u|-m)}R^*(u+k+m)&=\sum_{p} {(-1)^p\tau^{-k}(e^*_p)}{(u|-m)(u+m|-p)}\\ &= \sum_{p} {(-1)^p\tau^{-k}(e^*_p)}{(u|-m-p)}=\sum_{a} {(-1)^{a-m}\tau^{-k}(e^*_{a-m})}{(u|-a)}.
\end{align*}
\end{proof}
The  following proposition states that $Q^*(\bar u)$  and $R^*(\bar u)$ are generating  functions for the shifted Schur  functions.
\begin{proposition}\label{gen*}
Let $\lambda$ be an integer  vector with at most $l$  non-zero parts,  let $N=\sum_{i}\lambda_i$, and  let  $\lambda^\prime$  be a conjugate vector.
The  coefficient of
${\frac{1}{(u_1|\lambda_1)\cdots (u_l|\lambda_l)}}$ in  a shifted expansion of
 $Q^*( u_1,\dots, u_l)$  is  a
shifted Schur  function $s^*_\lambda$, and
the  coefficient of
${{(u|-\lambda_1)\cdots (u|-\lambda_l)}}$ in
 $R^*( u_1,\dots, u_l)$  is  a
shifted Schur  function $ (-1)^Ns^*_{\lambda^\prime}$:
\begin{align*}
Q^*( u_1,\dots, u_l) &=\sum_{l(\lambda)\le l}\frac{s^*_\lambda}{(u_1|\lambda_1)\cdots (u_l|\lambda_l)},\\
R^*( u_1,\dots, u_l)&=\sum_{l(\lambda)\le l} (-1)^{N} {s^*_{\lambda^\prime}}{(u_1|-\lambda_1)\cdots (u_l|-\lambda_l)}.
\end{align*}
\end{proposition}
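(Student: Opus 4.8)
The plan is to reduce the statement to the determinant formula (\ref{Q3}) of the previous proposition together with the Jacobi--Trudi identity for shifted Schur functions. Starting from
\begin{align*}
Q^*(\bar u)=\det\left[\frac{1}{(u_i|i-j)}\right]\prod_{i=1}^{l}Q^*(u_i-i+1),
\end{align*}
I would expand the determinant as a sum over $\sigma\in S_l$ of terms $\mathrm{sgn}(\sigma)\prod_i \frac{1}{(u_i|i-\sigma(i))}\,Q^*(u_i-i+1)$. Each factor has the form $\frac{1}{(u|m)}Q^*(u-k-m)$ with $k=\sigma(i)-1\ge 0$ and $m=i-\sigma(i)$ (note $k+m=i-1\ge 0$, so the hypotheses of Lemma \ref{lem_b} are met after grouping over the $l$ variables). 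Applying Lemma \ref{lem_b} with $k_i=\sigma(i)-1$, $m_i=i-\sigma(i)$, the $\sigma$-term contributes, in the shifted monomial basis $\prod_i \frac{1}{(u_i|a_i)}$, the coefficient $\prod_i \tau^{\sigma(i)-1}\!\big(h^*_{a_i-i+\sigma(i)}\big)$.

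Next I would collect all $\sigma$ contributing to a fixed monomial $\prod_i\frac{1}{(u_i|\lambda_i)}$; summing over $S_l$ gives the coefficient
\begin{align*}
\sum_{\sigma\in S_l}\mathrm{sgn}(\sigma)\prod_{i=1}^{l}\tau^{\sigma(i)-1}\!\big(h^*_{\lambda_i-i+\sigma(i)}\big)
=\det\left[\tau^{j-1}\big(h^*_{\lambda_i-i+j}\big)\right]_{1\le i,j\le l}.
\end{align*}
The key input now is the shifted Jacobi--Trudi identity from Section \ref{ashift}: for a partition $\lambda$ with at most $l$ parts, $s^*_\lambda=\det\!\big[\tau^{j-1}(h^*_{\lambda_i-i+j})\big]_{1\le i,j\le l}$. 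This identifies the coefficient with $s^*_\lambda$ when $\lambda$ is a partition. For a general integer vector $\lambda$ with at most $l$ nonzero parts, I would argue exactly as in the classical case of Section \ref{Sch1}: the determinant $\det[\tau^{j-1}(h^*_{\lambda_i-i+j})]$ is (up to sign) invariant under the straightening moves $\lambda\mapsto(\dots,\lambda_{i+1}-1,\lambda_i+1,\dots)$ coming from row permutations, so by permuting rows one reduces either to a partition (yielding $\pm s^*_\lambda$) or to a determinant with two equal rows (yielding $0$); this matches the convention for $s^*_\lambda$ on integer vectors. The $R^*$ statement follows by the identical computation using the second halves of Lemma \ref{lem_b} and the dual (elementary-symmetric) shifted Jacobi--Trudi identity $s^*_{\lambda'}=\det[\tau^{-(j-1)}(e^*_{\lambda_i-i+j})]$, the sign $(-1)^N$ being produced by the $(-1)^{a-m}$ factors in Lemma \ref{lem_b} accumulated over the $l$ variables, with $\sum_i(a_i-m_i)=N$ on the relevant monomial.

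The main obstacle I anticipate is not the bookkeeping of permutations but confirming that the $\tau$-twisted determinant is precisely the shifted Jacobi--Trudi expression for $s^*_\lambda$, i.e.\ that the powers of $\tau$ attached to the columns in my expansion ($\tau^{\sigma(i)-1}$, which after collecting becomes $\tau^{j-1}$ on column $j$) match Okounkov--Olshanski's convention exactly, including the case where $\lambda_i-i+j<0$ and one must know $h^*_m=0$ for $m<0$ is compatible with the $\tau$-action. I would handle this by checking the $l=1$ and $l=2$ cases explicitly against (\ref{e:tau})--(\ref{tau11}) and the definitions in Section \ref{ashift}, then invoking the general identity. A secondary point to verify carefully is that the shifted expansions in Lemma \ref{lem_b} converge/stabilize coefficientwise so that rearranging the (a priori infinite) sums over $\sigma$ and over monomials is legitimate; this is the same finiteness argument as in the first Lemma of Section \ref{Def}, since $h^*_k=0$ for $k<0$ forces each monomial to receive only finitely many contributions.
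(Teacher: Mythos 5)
Your proposal follows essentially the same route as the paper's own proof: expand the determinant in (\ref{Q3}) over $\sigma\in S_l$, set $k_i=\sigma(i)-1$, $m_i=i-\sigma(i)$, apply Lemma \ref{lem_b} to identify the coefficient of the shifted monomial as $\det[\tau^{j-1}(h^*_{\lambda_i-i+j})]$, and conclude via the shifted Jacobi--Trudi identity (\ref{shiftJT2}) together with the straightening conventions of Section \ref{Sch1}; the $R^*$ case and the sign $(-1)^N$ (using $\sum_i m_i=0$) are handled just as in the paper. The argument is correct and no substantive difference from the paper's proof is present.
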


\begin{proof} The expansion  of determinant  (\ref{Q3}) gives
 \begin{align*}
 Q^*( u_1,\dots, u_l)=\sum_{\sigma\in S_l} sgn(\sigma)  \frac{1}{(u_1| 1-\sigma(1))}\cdots \frac{1}{(u_l| l-\sigma(l))} \prod_{i=1}^{l} Q^*(u_i- i+1)\\
=\sum_{\sigma\in S_m} sgn(\sigma)  \frac{1}{(u_1| 1-\sigma(1))}\cdots \frac{1}{(u_l| l-\sigma(l))} \prod_{i=1}^{l} Q^*(u_i- (i-\sigma(i))- (\sigma(i)-1))\\
 \end{align*}
Set $k_i= \sigma(i)-1$   and  $m_i=i-\sigma(i)$ for $i=1,\ldots, l$. Observe that  $k_i\ge 0$ for any $i=1,\dots, l$, so by Lemma \ref{lem_b}, we obtain the coefficient of ${\frac{1}{(u_1|\lambda_1)\cdots (u_l|\lambda_l)}} $ is
$$
Q^*_\lambda
=\sum_{\sigma\in S_l} sgn(\sigma) \tau^{\sigma(1)-1} (h^*_{\lambda_1-1+\sigma(1)})\, \cdots\,  \tau^{\sigma(l)-1}(h^*_{\lambda_l-l +\sigma(l)})
=\det [\tau^{j-1}h^*_{\lambda_i-i+j}].
$$
Recall that  shifted  Schur  functions  satisfy  the Jacobi\,--\,Trudi  identity (\ref{shiftJT2}).
Exactly the same  argument   as in Section  \ref{Sch1} allows us to  use  the  Jacobi\,--\,Trudi  identity to
  extend the definition of   shifted  Schur functions  to  integer vectors $\alpha= (\alpha_1,\dots, \alpha_l)$, thus we have that
$
s^*_\alpha = sgn(\sigma) s^*_{\lambda},
$
if $\alpha -\rho_l=\sigma(\lambda -\rho_l)$ for some  $\sigma \in S_l$  and a partition $\lambda$, and $s^*_\alpha =0$ otherwise.

 Hence, the coefficient $Q^*_\lambda$ can be identified  with  shifted Schur function $s^*_\lambda$.
 Similarly,

 \begin{align*}
 R&^*( u_1,\dots, u_l)=\sum_{\sigma\in S_m} sgn(\sigma)  {(u_1| \sigma(1)-1)}\cdots {(u_l| \sigma(l)-l)} \prod_{i=1}^{l} R^*(u_i+i-1)\\
&=\sum_{\sigma\in S_m}  sgn(\sigma)  {(u_1| \sigma(1)-1)}\cdots {(u_l| \sigma(l)-l)}  \prod_{i=1}^{l} R^*(u_i +(i-\sigma(i))+ (\sigma(i)-1)).
 \end{align*}
Set $k_i= \sigma(i)-1$   and  $m_i=i-\sigma(i)$ for $i=1,\dots, l$. Then  $k_i\ge 0$ for any $i=1,\dots, l$,  $\sum_i m_i=0$, so by Lemma \ref{lem_b} and (\ref{shiftJT2}),
 \begin{align*}
R^*_\lambda
&=(-1)^{\sum\lambda_i}\sum_{\sigma\in S_l} sgn(\sigma) \tau^{1-\sigma(1)} (e^*_{\lambda_1-1+\sigma(1)})\, \cdots\,  \tau^{1-\sigma(l)}(e^*_{\lambda_l-l +\sigma(l)})\\
&=(-1)^{N}\det [\tau^{1-j}e^*_{\lambda_i-i+j}] = (-1)^{N} s^*_{\lambda^\prime}.
 \end{align*}
This completes the proof of the proposition.

\end{proof}

\subsection{Creation and  annihilation operators for shifted Schur functions}  
Similarly to   Section \ref{sec:ferm}, we introduce  creation and  annihilation operators $\Psi^{\pm}_k$, $k\in \bZ$,  acting on the  set 
of coefficients  $Q^*_\lambda$'s
 ($R^*_\lambda$'s) by
\begin{align}
\Psi^+_k (Q^*_\lambda)= Q^*_{(k,\lambda)}, \quad
\Psi^-_{-k} (R^*_\lambda)= R^*_{(k,\lambda)}.\label{Pp*}
\end{align}
which  read  by Proposition \ref {gen*} as
\begin{align}
\Psi^+_k (s^*_\lambda)= s^*_{(k,\lambda)}, \quad
\Psi^-_{-k} (s^*_\lambda)=(-1)^k s^*_{(k,\lambda^\prime)^\prime}.\label{Pp2*}
\end{align}
Shifted  Schur functions $s^*_\lambda$   span  the space of  shifted symmetric  functions   $\B^*$ and   (\ref{Pp2*})  define  how linear  operators   $\Psi^{\pm}_k $  act on  all of $\B^*$.

Recall that  shifted  Schur  functions  enjoy  exactly the same  lowering-raising   properties as classical  Schur  functions:
\begin{align*}
 s^*_{(\dots, \alpha_i, \alpha_{i+1},\dots)}= - s^*_{(\dots, \,\alpha_{i+1} -1\,, \,\alpha_{i}+1\, ,\dots)},
\end{align*}
which   imply  exactly the same  commutation relations  of operators  $\Psi^{\pm}_k $    as in the classical case:
\begin{align*}
\Psi^+_k\Psi^+_l &+ \Psi^+_{l-1}\Psi^+_{k+1}=0,\\
\Psi^-_k\Psi^-_l &+ \Psi^-_{l+1}\Psi^-_{k-1}=0,\\
\Psi^-_k\Psi^+_l& + \Psi^+_{l}\Psi^-_{k}=\delta_{-k,l}.
\end{align*}
Let us  rewrite these relations  in terms of  (shifted)  generating  functions in the spirit of vertex operator formalism.
Define
\begin{align*}
\Psi^+(v)=\sum_{k\in \bZ}\frac{\Psi^+_k}{ (v|k)}, \quad
\Psi^-(v)=\sum_{k\in \bZ}\Psi^-_{k} (v|-k).
\end{align*}
Then
\begin{align}
\Psi^+(v)\left(Q^*(\bar u)\right)= Q^*(v,u_1,\dots u_l),\quad
\Psi^-(v)\left(R^*(\bar u)\right)= R^*(v,u_1,\dots u_l),\label{p-r*}
\end{align}
and generating functions  of shifted Schur functions can be  viewed as a result of  application of
$\Psi^\pm(v)$  to vacuum vector:
\begin{align*}
Q^*(u_1,\dots u_l)= \Psi^+(u_1)\circ\dots\circ  \Psi^+(u_l) \,(1),\\
R^*(u_1,\dots u_l)= \Psi^-(u_1)\circ \dots\circ  \Psi^-(u_l) \, (1).
\end{align*}
The  commutation relations are
\begin{align}
\frac{1}{u}e^{-\partial_u}\circ\Psi^+(u)\circ\Psi^+(v)&+\frac{1}{v}e^{-\partial_v}\circ\Psi^+(v)\circ\Psi^+(u)=0,
\\
e^{\partial_u}\circ\frac{1}{u}\Psi^-(u)\circ\Psi^-(v)&+e^{\partial_v}\circ\frac{1}{v}\Psi^-(v)\circ\Psi^-(u)=0,
\\
\Psi^+(u)\circ\Psi^-(v)& +\Psi^-(v)\circ\Psi^+(u)=\sum_{k\in \bZ}\frac{(u|k)}{(v|k)}\cdot Id.
\end{align}

\subsection{Adjoint operators  for shifted symmetric functions}
Finally, we want to  write  $\Psi^\pm(u)$ in  ``normally ordered form''  similarly to   (\ref{pqr}).
 We define operators $DR^*_k$, $DQ^*_k$  acting on $\B^*$ as follows.
Let
 \begin{align*}
 DR^*(u)= \sum_{m=0}^{\infty } DR^*_m (u|m),\quad
 DQ^*(u)= \sum_{m=0}^{\infty } DQ^*_m \frac{1}{(u|-m)},
 \end{align*}
be the formal  shifted series with  $DR^*_m,  DQ^*_m$  being   linear operators    acting on $\B^*$ and  such that the series $DR^*(u)$  and $ DQ^*(u)$ have  the property
\begin{align}
DR^*(u) (Q^*(v))&=\frac{1}{v} e^{-\partial_v}\left((v-u)Q^*(v)\right)
= \left(1-\frac{u+1}{v}\right)Q^*(v-1),\label{DR*1}\\
DQ^*(u) (R^*(v))&=e^{\partial_v}\frac{1}{v} \left((v-u)R^*(v)\right)
= \left(1-\frac{u}{v+1}\right)R^*(v+1).
\label{DR*}
\end{align}
Formula (\ref{DR*}) describes the action  of $DR^*_m,  DQ^*_m$ on generators $h^*_k$ and $e^*_k$    respectively. For example,   $DR^*_m(h^*_k)$  is the coefficient of $\frac{(u|m)}{(v|k)}$
in the  expansion  of the  first equation of (\ref{DR*}):
\begin{align*}
 &Q^*(v-1)- \frac{u+1}{v} Q^*(v-1)=
 \sum_{k=0}^{\infty}\frac{\tau (h^*_k) }{(v|k)}
- (u+1) \sum_{k= 0}^{\infty} \frac{h^*_{k-1}}{(v|k)}.
\end{align*}
Therefore, for $k=0, 1,2,\dots$,
\begin{align*}
DR^*_0(h^*_k)&=\tau(h^*_k)- h^*_{k-1}=h^*_k +(k-2)h^*_{k-1},\quad \\
 DR^*_1(h^*_k)&=-h^*_{k-1},\quad
 DR^*_m(h^*_k)=0\quad ( m=2,3,\dots).
\end{align*}

Similarly,
 $(-1)^kDQ^*_m(e^*_k)$  is the coefficient of $\frac{(v|-k)}{(u|-m)}$
in the  expansion  of the  second equation of (\ref{DR*}), which  gives for $k=0, 1,2,\dots$
\begin{align*}
DQ^*_0(e^*_k)&=\tau^{-1}(e^*_k)- e^*_{k-1}=e^*_k +(k-2)e^*_{k-1},\\
 DQ^*_1(e^*_k)&=e^*_{k-1}, \quad
 DQ^*_m(e^*_k)=0 \quad (m=2,3,\dots).
\end{align*}

\begin{remark}
One can compare these formulas with (\ref{e:11}) and (\ref{e:12}).
\end{remark}

Next,  we extend  the action of $DR^*_m$, $DQ^*_m$  to all of $\B^*$,  by linearity and  by  the  rule
\begin{align}
DR^*(u)\left(\prod_{i=1}^l Q^*(u_i)\right)=\prod_{i=1}^l DR^*(u)(Q^*(u_i)),\label{mon1}\\
DQ^*(u)\left(\prod_{i=1}^l R^*(u_i)\right)=\prod_{i=1}^l DQ^*(u)(R^*(u_i)).\label{mon2}
\end{align}
In particular, write
\begin{align*}
\prod_{i=1}^l   Q^*(u_i)&=\sum_{0\le \lambda_1,\dots,\lambda_l<\infty }
\frac {h^*_{\lambda_1}\cdots h^*_{\lambda_l} } {(u_1|\lambda_1)\cdots (u_l|\lambda_l)},\\
\prod_{i=1}^l  R^*(u_i)&=\sum_{0\le \lambda_1,\dots,\lambda_l<\infty }
(-1)^{|\lambda|} e^*_{\lambda_1} \cdots {e^*_{\lambda_l} } {(u_1|-\lambda_1)}\cdots  {(u_l|-\lambda_l)},
\end{align*}
(here $|\lambda|=\sum\lambda_i$) to get the action of $DR^*(u)$ on the monomials that  span $\B^*$
\begin{align*}
DR^*(u)(h^*_{\lambda_1}\dots h^*_{\lambda_l})=DR^*(u)(h^*_{\lambda_1})\cdots DR^*(u)(h^*_{\lambda_l})=\prod_{i=1}^{l}(h_{\lambda_i} +(\lambda_i-2)h_{\lambda_i-1}- h_{\lambda_i} u),\\
DQ^*(u)(e^*_{\lambda_1}\dots e^*_{\lambda_l})=DR^*(u)(e^*_{\lambda_1})\cdots DR^*(u)(e^*_{\lambda_l})=\prod_{i=1}^{l}(e_{\lambda_i} +(\lambda_i-2)e_{\lambda_i-1}+ e_{\lambda_i} u),
\end{align*}
and expand these products in shifted powers of $u$ to get the explicit  values  of $DR^*_m(h^*_{\lambda_1}\cdots h^*_{\lambda_l})$ and $DQ^*_m(e^*_{\lambda_1}\cdots e^*_{\lambda_l})$.
For example,
\begin{align*}
DR^*(u)(h^*_a h^*_{b})=&(h_{a} +(a-2)h_{a-1})(h_{b} +(b-2)h_{b-1})\\
&-u(h_{a}h_{b}  +(a-2)h_{b}h_{a-1}+(b-2)h_{a}h_{b-1})
+(u|2)h_{a}h_{b},
\end{align*}
Hence,
\begin{align*}
DR^*_0(h^*_a h^*_{b})&=(h_{a} +(a-2)h_{a-1})(h_{b} +(b-2)h_{b-1}),\\
DR^*_1(h^*_a h^*_{b})&=-(h_{a}h_{b}  +(a-2)h_{b}h_{a-1}+(b-2)h_{a}h_{b-1}),\\
DR^*_2(h^*_a h^*_{b})&=h_{a}h_{b},\quad
DR^*_m(h^*_a h^*_{b})=0, \quad (m=3,4,\dots).
\end{align*}
Finally, from (\ref{mon1}), (\ref{mon2}) we write the action of $DR^*(v)$ on $ Q^*(\bar u )$ and of  $DQ^*(v)$ on $ R^*(\bar u)$.
 Formulas (\ref{DR*1}),  (\ref{DR*}),  (\ref{Q2}), (\ref{R2}) immediately  imply
\begin{align}
Q^*(v)\circ DR^*(v)\big( Q^*(u_1,\dots, u_l))= Q^*(v, u_1,\dots, u_l),\label{qz*1}\\
R^*(v)\circ DQ^*(v)\big( R^*(u_1,\dots, u_l))= R^*(v, u_1,\dots, u_l),
\label{qz*2}
\end{align}
and from   (\ref{qz*1}), (\ref{qz*2}) and (\ref{p-r*}) follows  the ``normally  ordered" presentation of $\Psi^\pm(u)$:
\begin{proposition}
\begin{align*}
\Psi^+(v)=Q^*(v)\circ DR^*(v),\quad \Psi^-(v)=R^*(v)\circ DQ^*(v).
\end{align*}
\end{proposition}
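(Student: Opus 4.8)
The plan is to deduce the proposition directly from the identities (\ref{qz*1}), (\ref{qz*2}) already recorded above, together with the fact from Proposition \ref{gen*} that the coefficients of $Q^*(\bar u)$ and $R^*(\bar u)$ are exactly the shifted Schur functions and that the latter form a linear basis of $\B^*$. No new computation is required; the argument is the same ``comparison of operators on a spanning set'' used in the classical case of Proposition \ref{DR}.

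First I would note that both $\Psi^+(v)$ and the composite $Q^*(v)\circ DR^*(v)$ are honest generating functions of linear operators on $\B^*$: for $\Psi^+(v)$ this is granted by (\ref{Pp2*}), which defines the $\Psi^\pm_k$ on the basis $\{s^*_\mu\}$; for $Q^*(v)\circ DR^*(v)$ it holds because $DR^*(v)$ is prescribed by (\ref{DR*1}) on the generators $h^*_k$, extended by linearity and the product rule (\ref{mon1}) to the monomial basis of $\B^*$, and then composed with multiplication by $Q^*(v)$. The one point that needs care --- and the step I would flag as the main (if mild) obstacle --- is that $Q^*(\bar u)$ is an infinite shifted series, so one must know that $Q^*(v)\circ DR^*(v)$ may be applied to it term by term: every coefficient of a fixed shifted monomial $1/\bigl((u_1|a_1)\cdots(u_l|a_l)\bigr)$ in the output must be a finite sum in $\B^*$. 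This is the same finiteness phenomenon that makes $Q(\bar u)$ well defined in Section \ref{Class} (it holds since $h^*_k=0$ for $k<0$ and the shift automorphisms $\tau^{\pm a}$ strictly lower indices), and it is exactly what gives (\ref{qz*1}), (\ref{qz*2}) their meaning.

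With this in place the comparison is immediate. By the defining property (\ref{p-r*}) one has $\Psi^+(v)\bigl(Q^*(\bar u)\bigr)=Q^*(v,u_1,\dots,u_l)$, while (\ref{qz*1}) says that $Q^*(v)\circ DR^*(v)$ satisfies the identical relation. Hence the two operator-valued series agree on every coefficient $Q^*_\lambda$ of $Q^*(\bar u)$; by Proposition \ref{gen*} these coefficients are the $s^*_\lambda$, and taking $l$ large with $\lambda$ a partition having at most $l$ nonzero parts realizes every basis element $s^*_\lambda$ of $\B^*$ this way. Two linear operators agreeing on a basis coincide, so, equating coefficients of each shifted power of $v$, $\Psi^+(v)=Q^*(v)\circ DR^*(v)$. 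The identity $\Psi^-(v)=R^*(v)\circ DQ^*(v)$ follows verbatim, using $R^*$, $DQ^*$, (\ref{qz*2}) and (\ref{p-r*}) in place of $Q^*$, $DR^*$, (\ref{qz*1}) and (\ref{p-r*}); here the relevant coefficients of $R^*(\bar u)$ are the signed shifted Schur functions $(-1)^{N}s^*_{\lambda'}$ of Proposition \ref{gen*}, which still span $\B^*$ because conjugation permutes partitions.
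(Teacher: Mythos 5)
Your argument is correct and follows essentially the same route as the paper, which derives the proposition by combining (\ref{qz*1}), (\ref{qz*2}) with (\ref{p-r*}) and using the fact that the coefficients $Q^*_\lambda=s^*_\lambda$ (resp.\ $R^*_\lambda=(-1)^N s^*_{\lambda'}$) span $\B^*$. Your additional remarks on finiteness of the coefficientwise application are a sensible elaboration but do not change the substance of the proof.
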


\section{Appendix}\label{ap}
In this  section,   we  collect  some facts  and basic materials 
used in the exposition above for the convenience of the reader.
\subsection{Boson-fermion  correspondence }\label{BF}
Consider  an infinite-dimensional complex vector space $V=\oplus _{j\in \bZ}\bC\, v_j$    with a linear  basis $\{v_j\}_{j\in \bZ}$.
We  define $F^{(m)}$ ($m\in \bZ$)   as  a linear span of semi-infinite monomials
$v_{i_m}\wedge v_{i_{m-1}}\wedge\dots  $ with the properties: \\
(1)\quad  $i_m>i_{m-1}>\dots$,  \\
(2)\quad  $i_k=k$  for $k<<0$.\\
The monomial   of the form $|m\>=v_m\wedge v_{m-1}\wedge\dots $ is called  the $m$th vacuum vector.
The elements of $F^{(m)}$ are linear  combinations of monomials  $ v_I= v_{i_1}\wedge v_{i_2}\wedge\dots $ that are different from $| m\>$ only at a finitely places, and $I=\{i_1, i_2, \ldots \}$.
The fermionic Fock space is  defined to be the graded space $$ \Fe=\oplus_{m\in \bZ} F^{(m)}.$$
 Many  important  algebraic  structures act on  Fock space, including   the Clifford algebra (or the algebra of fermions), the Heisenberg algebra $\mathcal {A}$,  the Virasoro algebra and the  infinite-dimensional Lie algebra  $gl_\infty$. Their actions  are closely related to  each other.

The algebra of fermions acts on the Fock space $\Fe$  by wedge  operators $\psi^+_k$  and contraction operators $\psi ^-_k$ ($k\in \bZ)$:
$$\psi^+_k \,( v_{i_1}\wedge v_{i_2}\wedge\cdots)= v_k\wedge v_{i_1}\wedge v_{i_2}\wedge\cdots,$$
and by
$$\psi^-_k\, ( v_{i_1}\wedge v_{i_2}\wedge\cdots)=
 \delta_{k, i_1 }v_{i_2}\wedge v_{i_3}\wedge\cdots
- \delta_{k, i_2 }v_{i_1}\wedge v_{i_3}\wedge\cdots
+\delta_{k, i_3 }v_{i_1}\wedge v_{i_2}\wedge\cdots-\dots\quad,
$$
The  operators  satisfy the  relations
\begin{align*}
\psi^+_k\psi^-_m+\psi^-_m\psi^+_k=\delta_{k,m},\quad
\psi^+_k\psi^+_m+\psi^+_m\psi^+_k=0,\quad
\psi^-_k\psi^-_m+\psi^-_m\psi^-_k=0.
\end{align*}
We combine the operators $\psi^\pm_k$  in generating functions  (formal distributions)
\begin{align}
\Psi^+(u)=\sum_{k\in \bZ} \psi^+_k u^{ k}\quad \text{ and } \quad  
\Psi^{-}(u)=\sum_{k\in \bZ} \psi^-_k u^{- k}. 
\end{align}
Then the action of Heisenberg algebra  $\mathcal {A}$  on Fock  fermionic space  $\Fe$  can be introduced   with the help   of the normal ordered product of these formal distributions. Set
$$
\alpha(u)=:\Psi^+(u)\Psi^-(u):\, =\Psi^+(u)_+\Psi^-(u)-\Psi^-(u)\Psi^+(u)_-,
$$
where by definition of normal ordered product,
$$
\Psi^+(u)_+=\sum_{k\ge 1} \psi^+_ku^{k},\quad \Psi^+(u)_-=\sum_{k\le 0} \psi^+_ku^{k}. 
$$
Then coefficients $\alpha_k$  of the formal distribution  $\alpha(u)=\sum \alpha_k u^{-k}$  and the central element $1$ satisfy  relations of Heisenberg algebra $\mathcal {A}$ (see e.g. \cite{Bom}, 16.3):
$$
[1,\alpha_k]=0,\quad   [\alpha_k,\alpha_m] =m\delta _{m,-k}  \quad (k,m\in \bZ).
$$

On the other hand,  there is also a natural action of  Heisenberg algebra $\mathcal {A}$  on Fock  boson space $\mathcal B^{(m)}=z^m\bC[ p_1, p_2,\dots ]$: 
$$
\alpha_n=\frac{\partial}{\partial_{p_n}}, \quad \alpha_{-n}=n p_n,\quad  \alpha_0=m\quad(  n\in \bN,\, m\in \bZ).
$$

The boson\,--\,fermion correspondence    identifies  the spaces $\B^{(m)}$ and $ \Fe^{(m)}$ as  equivalent  $\mathcal{A}$-modules (see e.g. \cite{FBZ}, \cite{F}, \cite{FLM}, \cite{JM}, \cite{Bom}). The  construction of the correspondence relies   on the  interpretation of $\B=\bC[ p_1, p_2,\dots ]$ as a ring of  symmetric functions,  where  $p_k$'s  are  interpreted  as   $k$-th  (normalized) power sums. Then  each graded  component  $\B^{(m)}$ is viewed as a  ring of symmetric functions, which is  known to be the ring of polynomials in variables $p_k$'s.
The linear basis of elements  $v_\lambda=(v_{\lambda_1+m}\wedge v_{\lambda_2+m-1}\wedge v_{\lambda_3+m-2}\dots  )$ of $ \Fe^{(m)}$, labeled by partitions $\lambda=(\lambda_1,\ge \lambda_2,\ge \dots\ge \lambda_l\ge 0)$   corresponds to the linear basis  $z^m s_\lambda$ of $\B^{(m)}$  (see e.g. \cite{Bom}  Theorem 6.1), where $s_{\lambda}$ is the Schur functions associated with the partition $\lambda$.

The correspondence  carries the action of  operators  $\psi^\pm_k$  on  $\Fe$  to the action  on the graded space  $\oplus_m  \B^{(m)}$.  It can be  described by the (normalized)  generating functions $\Psi^\pm(u,m)$, written in so-called vertex operator form, which are  traditionally  written as
\begin{align}
     \Psi^+(u,m)= u^{m+1}z\exp \big(\sum_{j\ge 1}p_j u^j  \big) \exp \big(-\sum_{j\ge 1}\frac{ \partial_{p_j}}{j} {u^{-j}}\big),
\label{psi2}\\
     \Psi^-(u,m)= u^{-m}z^{-1} \exp \big(-\sum_{j\ge 1} {p_j} u^j  \big) \exp \big(\sum_{j\ge 1}\frac{\partial_{p_j}}{j}  {u^{-j}}\big).
\label{psi21}
\end{align}
The formulas (\ref{psi2}), (\ref{psi21})  can be simplified if one changes the   set of  generators of  the ring of symmetric functions. Namely,
introduce    generating functions $E(u)$, $H(u)$ for   the  operators of  multiplication   by elementary symmetric functions  $e_k$ and  complete  symmetric  functions $h_k$,
and   generating  functions $DE(u)$, $DH(u)$  for the corresponding adjoint operators
(see  Section \ref{A-sch} for definitions).
      Then  one can write
\begin{align}
    \Psi^+(u,m)&= u^{m+1}z \,H(u) DE\left(\frac{-1}{u}\right),\label{psi1}\\
       \Psi^-(u,m)&= u^{{-m}}z^{-1} \, E(-u) DH\left(\frac{1}{u}\right).\label{psi11}
\end{align}

\subsection{Symmetric functions}\label{A-sch}
For more details please refer e.g. to \cite{Md, Stan}.
The ring of symmetric functions in  variables $ (x_1,x_2,\dots)$ possesses  a linear basis of   Schur  functions $s_\lambda$,  labeled by partitions $\lambda=(\lambda_1\ge \lambda_2\ge \dots\ge \lambda_n\ge 0)$. By  definition,
$$
s_\lambda(x_1,x_2,\dots )=\frac {\det[x_i^{\lambda_j+n-j}]}{\det[x_i^{n-j}]}.
$$
Here the determinants can be taken over finitely many variables, and Schur functions satisfy the stability condition:
$s_{\lambda}(x_1, \ldots, x_n, x_{n+1})|_{x_{n+1}=0}=s_{\lambda}(x_1, \ldots, x_n)$,
which defines the symmetric function. For simplicity we will simply write the symmetric functions as 
polynomials in variables $x_1, x_2, \ldots$.

The  families of complete symmetric functions $h_r= s_{(r)}$ and elementary symmetric functions $e_r= s_{(1^r)}$  correspond to  one-row and  one-column
partitions:
\begin{align}
h_r(x_1,x_2\dots)=\sum_{1\le i_1\le \dots \le i_r<\infty} x_{i_1}\dots x_{i_r},\label{defh}\\
e_r(x_1,x_2\dots)=\sum_{1< i_1< \dots < i_r<\infty} x_{i_1}\dots x_{i_r}.\label{defe}
\end{align}

One can  write generating  functions  for  complete and elementary symmetric functions in the form
 \begin{align}
H(u)=\sum_{k\ge 0} h_k u^k= \prod_{i\ge 1} \frac{1}{1-x_i u},\quad
E(u)=\sum_{k\ge 0} e_k u^k=  \prod_{i\ge 1} (1+x_i u),\label{e:22}
 \end{align}
hence,  $H(u) E(-u)=1$.

 The ring of  symmetric functions $\Lambda$   is  a polynomial ring in algebraically  independent  variables $(h_1, h_2,\dots)$  or $(e_1, e_2,\dots )$:
 \begin{align}
 \Lambda=\bC[h_1, h_2,\dots]=\bC[e_1, e_2,\dots].
 \end{align}
 In particular, the  so-called Jacobi\,--\,Trudi  identity  establishes a relation  between  linear basis elements   (Schur  functions) and   generators :
\begin{align}
s_\lambda &= \det[h_{\lambda_i-i+j}]_{1\le i,j\le l}, \label{e:JT1} \\
s_\lambda &= \det[e_{\lambda^\prime_i-i+j}]_{1\le i,j\le l^\prime},\label{e:JT2}
\end{align}
where $\lambda^\prime$  is the conjugate of partition $\lambda$ with length $l^\prime$.

The
ring of symmetric functions  possesses a natural  scalar product, where the classical Schur functions $s_\lambda$ constitute
  an orthonormal basis: $\<s_\lambda,s_\mu\>=\delta_{\lambda, \mu}$. Then for any symmetric function $f$ one can
define the adjoint operator $D_f$ acting on the ring of symmetric functions by the standard rule:
$ \<D_fg, w\>=\<g,fw\>$, where $g,f,w\in \Lambda$.  The properties of  adjoint operators are described in  \cite{Md},  I.5.
We set
  \begin{align*}
  DE(u)= \sum_{k\ge 0} D_{e_k} u^k,\quad DH(u)= \sum_{k\ge 0} D_{h_k} u^k.
  \end{align*}

\subsection{Hall-Littlewood symmetric functions}\label{HL}

The main reference for these  polynomials is  \cite{Md}.
Let $\lambda$  be a  partition  of length  at most $n$.  Hall-Littlewood  polynomials  are defined  by
\begin{align*}
P_\lambda (x_1,\dots, x_n;t)=\left(\prod_{i\ge 0}\prod_{j=1}^{m(i)}\frac{1-t}{1-t^j}\right)\sum_{\sigma \in S_n}
\sigma\left(x_1^{\lambda_1}\dots x_n^{\lambda_n}
\prod_{\lambda_i>\lambda_j}\frac{x_i-tx_j}{x_i-x_j}\right),
\end{align*}
where $m(i)$  is  the number  of  parts of the partition $\lambda$  that are  equal to $i$, and
 $S_n$  is the symmetric group of $n$ letters.
 Specialization $t=0$  gives $P_\lambda(x_1,\dots , x_n; 0) =  s_\lambda(x_1,\dots, x_n)$,  and specialization  $t=-1$ provides the Schur $Q$-functions.

Hall-Littlewood  polynomials form a linear basis  of  the ring  $\Lambda[t]$ of polynomials in variable $t$ with coefficients in the ring $\Lambda$ of symmetric  functions.
Let $Q(u)$  be the generating function  for the  family  of Hall-Littlewood  functions  indexed by row partitions $\lambda= (r)$:
\begin{align}Q(u)=\sum_{r=0}^{\infty} P_{(r)}(x_1,\dots, x_n ;t) u^r.\label{QHL}
\end{align}
One can show that
\begin{align*}
Q(u)=\prod_{i}\frac{1-x_itu}{1-x_iu}= H(u) E(-tu),
\end{align*}
where $H(u)$  and $E(u)$  are the generating functions  in (\ref{e:22}).

The statement that   coefficients of the generating  function  (\ref{fHL}), where  $Q(u)$  is given by   (\ref{QHL}), are exactly  Hall-Littlewood  symmetric functions
is proved in
\cite{Md} III \,--\, (2.15).

\subsection{Schur  $Q$-functions}\label{App:q}
For the short  review of the of the  properties of   Schur  $Q$-functions we follow \cite{Ste1} and references  therein.
Specialization  $t=-1$ of Hall-Littlewood functions  defines Schur  $Q$-functions:
$Q_\lambda (x_1, x_2,\dots) = P_\lambda(x_1, x_2,\dots :-1)$.  The  historical  definition  \cite{Schur} of Schur  $Q$-functions without  reference to Hall-Littlewood  symmetric functions is as  follows.
For integers $n\in \bZ_{\ge 0}$  one defines  symmetric  functions $Q_n= Q_n (x_1, x_2,\dots) $  by
\begin{align}
Q(u)= \sum_{n=0}^{\infty} Q_n u^n=\prod_{i=1}^{\infty}\frac{1+x_iu}{1-x_iu}.\label{def:q}
\end{align}
For pairs of integers $m,n\in \bZ_{\ge 0}$  one defines  symmetric  functions $Q_{(m,n)}= Q_{(m,n)} (x_1, x_2,\dots) $  by
$$
Q_{(m,n)}= Q_mQ_n+2\sum_{s=1}^{n} (-1)^s Q_{m+s}Q_{m-s}.
$$
Then
 $Q_{(m,n)}= - Q_{(n,m)}$  and $Q_{(n,0)}= Q_n$.

Finally, consider a strict partition $\lambda=(\lambda_1>\dots >\lambda_l>0)$. If $l$  is odd, set $\lambda_{l+1}=0$, and replace  $l$  by $l+1$. Thus, we can assume that $l$  is always even.  Then   Schur $Q$-function indexed by $\lambda$  is defined by the Pfaffian of a skew-symmetric matrix
\begin{align*}
Q_\lambda=\mathrm{Pf}[ Q_ {(\lambda_i,\lambda_j)}]_{1\le i,j\le l}.
\end{align*}
In \cite{JP} the definition is  extended to  skew Schur  $Q$-functions  $Q_{\lambda/\mu}$. One can also define  skew Schur  $Q$-functions  in terms of   shifted  Young  tableau  \cite{Ste2}.
The subring of  symmetric functions,  generated  by  $(Q_1, Q_2,\dots)$  is known  to be a polynomial ring  $\bC[p_1,p_3, p_5,\dots]$, where
$p_k =\sum_i x_i^k$
is the $k$-th power sum.  Schur  $Q$-functions  $Q_\lambda$, labeled by  strict  partitions,  form a linear basis of this  subring.

\subsection{Shifted Schur functions.}\label{ashift}
We follow   notations  and  definitions of \cite{OO1}.
The algebra of shifted symmetric functions $\B^*$  is a deformation  of  the classical algebra  of symmetric  functions, which has many   applications  in representation theory.
In particular,  shifted symmetric  functions are well-known in the study of the centers  of universal enveloping algebras,  of Capelli-type identities,  of asymptotic  characters  for unitary groups and symmetric groups, and  for the connections  to  representation theory of infinite-dimensional  quantum groups and Yangians.  Namely,  the  Harish-Chandra isomorphism  identifies  the center of  the universal enveloping  algebra  of the general linear Lie algebra $gl_n(\bC)$  with the algebra of  shifted symmetric functions,  sending  a central element to its eigenvalue on a highest weight module.
Then one can consider  a distinguished  basis of the center, and  the images of the elements of that basis  under the isomorphism are  exactly   the shifted Schur functions.

Combinatorially  a shifted Schur polynomial   $s^*_\lambda(x_1,\dots, x_n)$ can  be defined as
 a ratio of determinants
\begin{align*}
s_\lambda^*(x_1,\dots, x_n)=\frac{\det(x_i+n-i|\lambda_j+n-j)}{\det(x_i+n-i|n-j)},
\end{align*}
where shifted  powers $(x|k)$ are  defined by (\ref{falf}).
 The stability property of shifted Schur  polynomials allows one  to introduce the shifted Schur functions, which we denote as
$s_\lambda^*=s_\lambda^*(x_1, x_2,\dots)$.
In  particular, the  complete  shifted Schur functions $h^*_r=s^*_{(r)}$ are
\begin{align}
h^*_r(x_1,x_2,\dots )=\sum_{1\le i_1 \le \dots \le i_r <\infty} (x_{i_1}-r+1)(x_{i_2}-r+2)\cdots x_{i_r},\label{h^*}
\end{align}
and the  elementary shifted Schur functions $e^*_r=s^*_{(1^r)}$ are
\begin{align}
e^*_r(x_1,x_2,\dots )=\sum_{1\le i_1 <\dots < i_r <\infty} (x_{i_1}+r-1)(x_{i_2}+r-2)\cdots x_{i_r}.\label{e^*}
\end{align}
By Corollary 1.6. in \cite{OO1},  shifted Schur  functions $s^*_\lambda$  form a linear basis in the ring of shifted symmetric functions, which is also a polynomial  ring in shifted  complete or elementary symmetric  functions:
$$
\B^*=\bC[h^*_1, h^*_2,\dots]=\bC[e^*_1, e^*_2,\dots].
$$
 Theorem 13.1  in  \cite{OO1} states that
\begin{align}
s^*_\lambda= \det [\tau^{j-1}h^*_{\lambda_i-i+j}]_{1\le i,j\le l}, \quad
s^*_\lambda= \det [\tau^{1-j}e^*_{\lambda^\prime_i-i+j}]_{1\le i,j\le m},\label{shiftJT2}
\end{align}
for arbitrary $l,m$  such that $l\ge l(\lambda)$, $m\ge \lambda_1$,
where $\tau$ is the automorphism of  the algebra  of  shifted Schur  functions, defined by the formula
$$ \tau(h^*_k)=h^*_k+(k-1)h^*_{k-1},\quad \tau^{-1}(e^*_k)=e^*_k+(k-1)e^*_{k-1}. $$


\begin{thebibliography}{99}

\bibitem{FBZ}
E.~Frenkel, D.~Ben-Zvi,
{\em Vertex algebras and algebraic curves}.
Mathematical Surveys and Monographs, 88. American Mathematical Society, Providence, RI, 2004.

\bibitem{F}
I.~B.~Frenkel, {\em Two constructions of affine Lie algebra representations and boson-fermion correspondence in quantum field theory}, J. Funct. Anal. 44 (1981), 259--327.

\bibitem{FLM}
I.~B.~Frenkel, J.~Lepowsky, A.~Meurman,
{\em Vertex operator algebras and the Monster.}
Pure and Applied Mathematics, 134. Academic Press, Inc., Boston, MA, 1988.


\bibitem{GS} L.~Gatto, P.~Salehyan,  {\em Hasse\,-\,Schmidt  derivations on  Grassmann algebras with applications to vertex operators}, IMPA monographs,  Springer, New York,  2016.


\bibitem{JM}
M.~ Jimbo, T.~Miwa, {\em Solitons and infinite dimensional Lie algebras}, Publ. RIMS, Kyoto Univ. 19 (1983),
943--1001.

\bibitem{J12} N.~Jing, {\em Vertex operators and Hall-Littlewood  symmetric functions},
Adv. Math. 87 (1991), 226--248.

\bibitem{JR}
  N.~Jing, N.~Rozhkovskaya, {\em Vertex operators arising from Jacobi-Trudi identities}, Comm. Math. Phys. 346 (2016), 679--701.

\bibitem{JP} T.~J\'ozefiak,  P.~Pragacz,  {\em A determinantal forumula for skew Schur $Q$-functions}, J. London  Math. Soc. 43 (1991), 76--90.

\bibitem{Bom}
V.~G. Kac, A.~K. Raina, N.~Rozhkovskaya, {\em Bombay lectures on highest weight representations of infinite dimensional Lie algebras}, 2nd ed., World Scientific, Hackensack, NJ, 2013.

\bibitem{Md}
I.~G.~Macdonald, {\em Symmetric functions and Hall polynomials},
 2nd ed., Oxford Univ. Press, New York, 1995.

\bibitem{OO1}
A.~Okounkov, G.~Olshanski, {\em Shifted Schur functions}, St. Petersburg Math. J. 9
(1998), 239--300. 

\bibitem{Schur} I.~Schur,   {\em \"Uber  die Darstellung  der symmetrischen und der alternierenden Gruppe  durch  gebrochene  lineare Substitutionen},  J. Reine Angew. Math.  139 (1911), 155--250.

\bibitem{Stan} R.~P.~Stanley,  {\em  Enumerative combinatorics}, Vol. 2.
Cambridge Studies in Advanced Mathematics, 62. Cambridge University Press, Cambridge, 1999.

\bibitem{Ste2}  J.~R.~Stembridge, {\em Noninteresting paths, pfaffians and plane  partitions}, Adv. Math. 83 (1990), 96--131.

\bibitem{Ste1} J.~R.~Stembridge, {\em Enriched  $P$- partitions}, Trans. Amer. Math. Soc. 349 (1997), 763--788.


\end{thebibliography}
\end{document}